\renewcommand{\mathcal}{\mathscr}
\renewcommand{\leq}{\leqslant}
\renewcommand{\le}{\leqslant}
\renewcommand{\geq}{\geqslant}
\renewcommand{\ge}{\geqslant}
\definecolor{citation}{rgb}{0.2,0.5,0.2}
\definecolor{formula}{rgb}{0.1,0.2,0.5}
\definecolor{url}{rgb}{0,0.2,0.7}
\newtheorem{theorem}{Theorem}[section]
\newtheorem{corollary}[theorem]{Corollary}
\newtheorem{lemma}[theorem]{Lemma}
\newtheorem{prop}[theorem]{Proposition}
\theoremstyle{definition}
\newtheorem{defn}[theorem]{Definition}
\theoremstyle{remark}
\newtheorem{rem}[theorem]{Remark}
\numberwithin{equation}{section}
\def\R {\mathbb{R}}
\def\N {\mathbb{N}}
\def\S {\mathbb{S}}
\newlength{\defbaselineskip}
\newcommand{\setlinespacing}[1]
           {\setlength{\baselineskip}{#1 \defbaselineskip}}
\begin{document}

\title[Some monotonicity results for general systems of PDEs]
{Some monotonicity results \\
for general systems of nonlinear elliptic PDEs}

\author[Julien Brasseur]{Julien Brasseur}
\address[Julien Brasseur]{Institut de Math\'ematiques de Marseille (I2M),
Centre de Math\'ematiques et Informatique (CMI),
Technop\^ole Ch\^ateau-Gombert,
39, rue F. Joliot Curie,
13453 Marseille Cedex 13, France}
\email{julien.brasseur@ens-lyon.fr}

\author[Serena Dipierro]{Serena Dipierro}
\address[Serena Dipierro]{Institut f\"ur Analysis und Numerik,
Otto-von-Guericke Universit\"at Magdeburg,
Universit\"atsplatz 2, 39106 Magdeburg, Germany}
\email{serydipierro@yahoo.it}

\begin{abstract}
In this paper we show that minima and stable solutions
of a general energy functional of the form
$$ \int_{\Omega} F(\nabla u,\nabla v,u,v,x)dx $$
enjoy some monotonicity properties,
under an assumption on the growth at infinity of the energy.

Our results are quite general, and comprise some rigidity results which are known
in the literature.
\end{abstract}

\subjclass[2010]{35J50}

\keywords{Monotonicity results, stable solutions, domain perturbations.}

\maketitle

\tableofcontents

\section{Introduction}

\subsection{Monotonicy and $1$-dimensional symmetry for systems}

In this paper we study monotonicity properties of minima and stable solutions
of a quite general energy functional of the type
\begin{equation}\label{quitegen}
\int_{\Omega} F(\nabla u,\nabla v,u,v,x)dx,
\end{equation}
where $\Omega\subset\R^n$.

Recent years have seen numerous ongoing research activities
in investigating symmetry properties of systems of PDEs. A typical example is
\begin{eqnarray}
\left\{
\begin{array}{r c l}
\Delta u=uv^2,  \\
\Delta v=vu^2,\\
u,v>0.
\end{array}
\right. \label{bose}
\end{eqnarray}
which arise in phase separation for Bose-Einstein condensates with multiple states.
We refer to \cite{BLWZ, BSWW} and references therein
for a derivation of this phase separation model.

In particular, in \cite{BLWZ}, the authors prove existence,
symmetry and non-degeneracy for solutions to \eqref{bose} in 1-dimension.
They show that entire solutions are reflectionally symmetric,
i.e. there exists $x_0\in\R$ such that $u(x-x_0)=v(x_0-x)$ for any $x\in\R$.
They also establish a result which may be seen as the analogue of a celebrated
conjecture of De Giorgi for problem \eqref{bose} in dimension $2$;
more precisely, they show that monotone solutions to \eqref{bose} in dimension $2$
have one-dimensional symmetry, under the growth condition
$$ u(x)+v(x)\le C(1+|x|), $$
for some $C>0$.

On the other hand, the linear growth is the lowest possible for positive solutions to \eqref{bose}.
Namely, if there exists $\alpha\in(0,1)$ such that
$$ u(x)+v(x)\le C(1+|x|)^\alpha,$$
then $u=v\equiv0$, see \cite{noris}.

In dimension $2$, in \cite{farina} improves the result in \cite{BLWZ}
showing that if $(u,v)$ is a monotone solution
to \eqref{bose} and has at most algebraic growth at infinity then it must be one-dimensional.

The monotonicity condition can be weakened in order to get the one-dimensional symmetry for solutions to \eqref{bose}. Indeed, it has been proved in \cite{BSWW} stable solutions to \eqref{bose}
are one-dimensional in $\R^2$.

Symmetry results in dimension $2$ for systems with more general nonlinearities have been achieved in
\cite{fazly}. Quasilinear (possibly degenerate) elliptic systems in $\R^2$ have
also been considered in \cite{Di}.

As for dimensions higher than $2$, in \cite{soave} the authors prove that
if $(u,v)$ has algebraic growth and
$$ \lim_{x_n\to\pm\infty}\left(u(x',x_n)-v(x',x_n)\right) =\pm\infty \quad
{\mbox{ uniformly in }} x'\in\R^{n-1},$$
then $(u,v)$ only depends on $x_n$. Also, in \cite{Wa,Wa2} it has been proved that
if $(u,v)$ has linear growth then it is one-dimensional.

Recently the nonlocal counterpart of \eqref{bose} has been also investigated,
see e.g. \cite{zilio, zilio1, zilio2}, and some symmetry results have been obtained in
\cite{pinamonti} for a quite general system of nonlocal PDEs of the form
\begin{eqnarray*}
\left\{
\begin{array}{r c l}
(-\Delta)^{s_1} u=F_1(u,v),  \\
(-\Delta)^{s_2} v=F_2(u,v),
\end{array}
\right.
\end{eqnarray*}
where $F_1$ and $F_2$ denote the derivatives of a function $F\in C^{1,1}_{loc}(\R^2)$
with respect to the first and the second variable, respectively; $s_1$, $s_2\in(0,1)$ and, given $s\in(0,1)$, $(-\Delta)^s$ stands for the fractional Laplace operator
$$ (-\Delta)^su(x):= \mathrm{p.v.}\,\int_{\R^n}\frac{u(x)-u(y)}{|x-y|^{n+2s}}\,dy,$$
where p.v. denotes the Cauchy principal value
(see \cite{guida} for the definition and further details).
See also \cite{sire} for symmetry results for nonlocal systems of equations.

\medskip

Aim of this paper is to provide monotonicity results for minima and stable solutions
of energy functionals of the form \eqref{quitegen} (thus embracing all the systems considered above).
We will consider the case in which both the domain $\Omega$ and the functional $F$ are
invariant under translations in the $e_n$-direction, and we will
deal with symmetry properties of minima or stable solutions in the class of functions
which are obtained by piecewise Lipschitz domain deformations in the $e_n$-direction
(see Definitions \ref{deform} and \ref{pert}).
We will also require some mild assumptions on $F$ and a growth condition (see \eqref{growth}).

The key tool of our proofs rely on a technique introduced
in \cite{SaVa2} to study the regularity of fractional minimal surfaces in dimension 2
(see \cite{CRS} where these object were introduced),
and developed in \cite{SaVa} to work in a more general setting.
See also \cite{CSaVa, DipSV} where these techniques have been used
in the context of free boundary problems.

Here we adapt the new strategy of \cite{SaVa} to the case of general systems of equations.
The idea is to look at the stability inequality without dealing with its precise form
(which, in some cases, can be very hard to handle), and
simply compare the energies of the couple $(u,v)$ to that of its translations.
For this, one also needs to modify the ``translated'' couple at infinity
to make it a compact perturbation of $(u,v)$.
Here the growth condition comes into play and ensures that the
energy of the perturbed couple can be made arbitrarily close
to the energy of $(u,v)$. Moreover, if $u$ and $v$ are not monotone
in the $e_n$-direction, then one can modify locally the perturbed couple
in order to get lower energy, but this is in contradiction with
the minimality of $(u,v)$.

This strategy allows to deal with quite general form of energy functionals.
\medskip

We now introduce the setting in which we will work
and give precise definitions and statements
of our results.

\subsection{The mathematical setting}

We consider a domain~$\Omega\subset\R^n$ and study the symmetry properties
of minima for functionals of the type
$$ \mathcal{E}(u,v):=\int_{\Omega} F(\nabla u, \nabla v, u, v, x)dx. $$
We assume that the domain~$\Omega$ and the functional~$F$ are invariant under
translations in the~$e_n$-direction, namely
$$ \Omega= V \times\R, \qquad V \subseteq\R^{n-1}, $$
and~$F$ does not depend on the~$x_n$-coordinate.

We will denote a point in~$\Omega$ by~$x=(x',x_n)\in V\times\R$.

We also suppose that~$F$ is convex with respect to the first two variables.
Precisely, we assume that
\begin{equation}\label{F1}
F=F(p_1,p_2,z_1,z_2,x')\in C(\R^{2n}\times\R^2\times V),
\end{equation}
and~$F$ is~$C^2$ and uniformly convex in~$p_1$ and~$p_2$ at all~$p_1,p_2$
with~$p_1\cdot e_n\neq0$ and~$p_2\cdot e_n\neq0$, for all~$(z_1,z_2,x')\in\R^2\times V$.
Finally, we assume that there exists a constant~$C>0$ such that
\begin{equation}\begin{split}\label{F2}
& |F_{p_1p_1}(p_1+q_1,p_2,z_1,z_2,x')|\leq C|F_{p_1p_1}(p_1,p_2,z_1,z_2,x')|, \\
& |F_{p_2p_2}(p_1,p_2+q_2,z_1,z_2,x')|\leq C|F_{p_2p_2}(p_1,p_2,z_1,z_2,x')|, \\
& |F_{p_1p_2}(p_1+q_1,p_2+q_2,z_1,z_2,x')|\leq C|F_{p_1p_2}(p_1,p_2,z_1,z_2,x')|,
\end{split}\end{equation}
for any~$p_1,p_2,q_1,q_2\in\R^n$ with~$|q_1|\leq|p_1\cdot e_n|/4$ and~$|q_2|\leq|p_2\cdot e_n|/4$. We will make these assumptions throughout the paper.

Given~$R>0$, we consider the following energy functional
$$ \mathcal E_R(u,v):=\int_{\Omega\cap B_R} F(\nabla u(x),\nabla v(x),u(x),v(x),x') dx, $$
where, as usual, $B_R$ denotes the ball of radius~$R$ centered at the origin.

Following the line in~\cite{SaVa}, we want to study the symmetry properties
of minimal or stable solutions for the functional above
among suitable perturbations, obtained by domain deformations
in the direction given by~$e_n$.
To do this, we recall the following definitions introduced in~\cite{SaVa}.

\begin{defn}\label{deform}
Given a function~$w$, an~$e_n$-Lipschitz deformation of~$w$ in~$B_R$ is
a function~$\overline w$ defined by
$$ \overline w(x)=w(x+\varphi(x)e_n) \qquad \mbox{for any } x\in\Omega, $$
where~$\varphi$ is a Lipschitz function with compact support in~$B_R$,
with $\|\partial_n\varphi\|_{L^{\infty}(\R^n)}<1$.
\end{defn}

\begin{defn}\label{pert}
Given a function~$w\in C^{0,1}(\Omega)$, a piecewise~$e_n$-Lipschitz deformation
of~$w$ in~$B_R$ is a function~$\overline w\in C^{0,1}(\Omega)$ defined by
$$ \overline w(x) =\overline w^{(i)}(x) \qquad \mbox{for some $i$ (depending on $x\in\Omega$)}, $$
where~$\overline w^{(1)},\ldots,\overline w^{(m)}$ constitute a finite number
of~$e_n$-Lipschitz deformations of~$w$ in~$B_R$.
In this case, we write~$\overline w\in D_R(w)$.

Also, if all~$\overline w^{(i)}$ satisfy
$$ \overline w^{(i)}(x)=w(x+\varphi^{(i)}(x)e_n) \qquad \mbox{with } \|\varphi^{(i)}\|_{C^{0,1}(\Omega)}\leq\delta $$
for some~$\delta$, we write~$\overline w\in D^{\delta}_R(w)$.
\end{defn}

We recall also some elementary properties of a piecewise~$e_n$-Lipschitz deformation,
which follow easily from the definitions above:

\begin{prop}
The following properties hold true:
\begin{itemize}
\item[i)]if $\overline w^1,\overline w^2\in D^{\delta}_R(w)$, then $\min\left\lbrace \overline w^1,\overline w^2\right\rbrace, \max\left\lbrace \overline w^1,\overline w^2\right\rbrace\in D^{\delta}_R(w)$;
\item[ii)] if $\overline w\in D^{\delta}_R(w), \tilde w\in D^{\delta}_R(\overline w)$, then $\tilde w\in D^{3\delta}_R(w)$;
\item[iii)] if $\overline w\in D^{\delta}_R(w)$, then $\|\overline w-w\|_{L^{\infty}(\Omega)} \leq C\delta\|w\|_{C^{0,1}(\Omega)}$;
\item[iv)] if $\overline w\in D^{\delta}_R(w), w\in C^{1,1}(\Omega)$, then
$\|\overline w-w\|_{C^{0,1}(\Omega)} \leq C\delta\|w\|_{C^{1,1}(\Omega)}$.
\end{itemize}
\end{prop}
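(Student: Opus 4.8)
The plan is to read off all four items directly from Definitions~\ref{deform} and~\ref{pert}: (i)--(ii) are purely structural and follow from manipulating the finite lists of $e_n$-Lipschitz deformations, while (iii)--(iv) are quantitative and follow from the chain rule. For (i), write $\overline w^1$ as assembled from $e_n$-Lipschitz deformations $\overline w^{1,(1)},\dots,\overline w^{1,(p)}$ of $w$ in $B_R$, with displacements $\varphi^{1,(k)}$ satisfying $\|\varphi^{1,(k)}\|_{C^{0,1}(\Omega)}\le\delta$, and likewise $\overline w^2$ from $\overline w^{2,(1)},\dots,\overline w^{2,(q)}$. At every $x\in\Omega$ the value $\min\{\overline w^1,\overline w^2\}(x)$ coincides with $\overline w^{1,(k)}(x)$ or $\overline w^{2,(\ell)}(x)$ for suitable indices, so $\min\{\overline w^1,\overline w^2\}$ is a piecewise $e_n$-Lipschitz deformation of $w$ built from the finite list $\{\overline w^{1,(k)}\}_k\cup\{\overline w^{2,(\ell)}\}_\ell$; it lies in $C^{0,1}(\Omega)$ since the minimum of finitely many Lipschitz functions is Lipschitz, and since every displacement in that list already has $C^{0,1}$-norm $\le\delta$, we get $\min\{\overline w^1,\overline w^2\}\in D^\delta_R(w)$. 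The case of $\max$ is identical.

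For (ii), suppose $\overline w(x)=w(x+\varphi^{(k)}(x)e_n)$ on the $k$-th piece, with $\|\varphi^{(k)}\|_{C^{0,1}(\Omega)}\le\delta$ and $\|\partial_n\varphi^{(k)}\|_{L^\infty}<1$, and $\tilde w(x)=\overline w(x+\psi^{(\ell)}(x)e_n)$ on the $\ell$-th piece, with $\|\psi^{(\ell)}\|_{C^{0,1}(\Omega)}\le\delta$. For each pair of indices define the globally defined function
\[
\Phi^{(k,\ell)}(x):=\psi^{(\ell)}(x)+\varphi^{(k)}\big(x+\psi^{(\ell)}(x)e_n\big),\qquad x\in\Omega ,
\]
so that $\tilde w(x)=w\big(x+\Phi^{(k,\ell)}(x)e_n\big)$ whenever $x$ lies in the $\ell$-th piece of $\tilde w$ and $x+\psi^{(\ell)}(x)e_n$ lies in the $k$-th piece of $\overline w$; these finitely many sets cover $\Omega$. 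One checks that $\Phi^{(k,\ell)}$ has compact support in $B_R$ (if $x\notin B_R$ then $\psi^{(\ell)}(x)=0$, hence $x+\psi^{(\ell)}(x)e_n=x\notin B_R$ and $\varphi^{(k)}(x)=0$), that it is Lipschitz as a composition of Lipschitz maps, and, via the chain rule together with the smallness of $\|\partial_n\varphi^{(k)}\|_{L^\infty}$ and $\|\partial_n\psi^{(\ell)}\|_{L^\infty}$ (which holds once $\delta$ is small), that $\|\partial_n\Phi^{(k,\ell)}\|_{L^\infty}<1$ and $\|\Phi^{(k,\ell)}\|_{C^{0,1}(\Omega)}\le 3\delta$. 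Hence each $w(\cdot+\Phi^{(k,\ell)}(\cdot)e_n)$ is an $e_n$-Lipschitz deformation of $w$ with displacement of $C^{0,1}$-norm $\le3\delta$, and since $\tilde w$ equals one of them at every point, $\tilde w\in D^{3\delta}_R(w)$.

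For (iii), on the $k$-th piece $|\overline w(x)-w(x)|=|w(x+\varphi^{(k)}(x)e_n)-w(x)|\le\|w\|_{C^{0,1}(\Omega)}\,|\varphi^{(k)}(x)|\le\delta\,\|w\|_{C^{0,1}(\Omega)}$, and taking the supremum over $x$ and over the finitely many pieces gives the bound with $C=1$. For (iv), the $L^\infty$ part is (iii) together with $\|w\|_{C^{0,1}(\Omega)}\le\|w\|_{C^{1,1}(\Omega)}$. For the Lipschitz part, fix a piece and differentiate $\overline w^{(k)}(x)=w(x+\varphi^{(k)}(x)e_n)$: with $y=x+\varphi^{(k)}(x)e_n$ the chain rule gives
\[
\nabla\overline w^{(k)}(x)-\nabla w(x)=\big(\nabla w(y)-\nabla w(x)\big)+\partial_n w(y)\,\nabla\varphi^{(k)}(x),
\]
whence $|\nabla\overline w^{(k)}(x)-\nabla w(x)|\le\|w\|_{C^{1,1}(\Omega)}|\varphi^{(k)}(x)|+\|\nabla w\|_{L^\infty}\|\nabla\varphi^{(k)}\|_{L^\infty}\le 2\delta\,\|w\|_{C^{1,1}(\Omega)}$ a.e. Thus each $\overline w^{(k)}-w$ is globally Lipschitz on $\Omega$ with constant $\le 2\delta\,\|w\|_{C^{1,1}(\Omega)}$; since $\overline w-w\in C^{0,1}(\Omega)$ coincides at each point with one of the finitely many $\overline w^{(k)}-w$ and the sets $\{\overline w=\overline w^{(k)}\}$ are closed, a segment-crossing argument upgrades this to the same Lipschitz bound for $\overline w-w$, which with the $L^\infty$ estimate yields (iv).

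The only steps requiring genuine care are the two just flagged: in (ii), the stability of the constraint $\|\partial_n(\cdot)\|_{L^\infty}<1$ under composition, which forces $\delta$ to be taken small and needs a careful chain rule; and in (iv), turning the piece-by-piece a.e.\ gradient bounds into a genuine global Lipschitz estimate for the piecewise function — the one point where the piecewise (as opposed to single-deformation) nature of the objects must be handled with attention, using the continuity of $\overline w$ and the closedness of the pieces. Everything else is bookkeeping.
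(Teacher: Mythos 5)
Your argument is correct and is exactly the direct verification from Definitions~\ref{deform} and~\ref{pert} that the paper intends: the paper states these properties without proof (``follow easily from the definitions above'', recalled from the Savin--Valdinoci setting), so there is no alternative route to compare against. The two points you flag as delicate are indeed the only ones, and both are handled adequately — in (ii) the bound $\|\partial_n\Phi^{(k,\ell)}\|_{L^\infty}\le \delta+\delta(1+\delta)<1$ does require $\delta$ small, and in (iv) the patching step is only sketched, but it follows from the standard fact that a continuous function agreeing at every point with one of finitely many $L$-Lipschitz functions (on the closed sets $\{\overline w=\overline w^{(k)}\}$) is itself $L$-Lipschitz along any segment.
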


In the sequel we will also assume a growth condition on the functional~$\mathcal E$.
Namely, we suppose that there exists a constant $C>0$ such that,
for~$R$ sufficiently large,
\begin{equation}\begin{split}\label{growth}
&\int_{\Omega\cap B_R}|F_{p_1p_1}(\nabla u,\nabla v,u,v,x')||\nabla u|^2 +
|F_{p_2p_2}(\nabla u,\nabla v,u,v,x')||\nabla v|^2 \\
&\qquad\qquad\qquad\qquad\qquad + |F_{p_1p_2}(\nabla u,\nabla v,u,v,x')||\nabla u||\nabla v| dx\leq C R^2.
\end{split}\end{equation}

\subsection{A monotonicity result for minimizers of the energy functional}

The first result here deals with~$e_n$-minimizers of~$\mathcal E$.
To state it, we give the following:

\begin{defn}
We say that~$(u,v)$, with~$u,v\in C^{0,1}(\Omega)$, is an~$e_n$-minimizer for~$\mathcal E$
if for any~$R>0$ we have that~$\mathcal E_R(u,v)$ is finite and
$$ \mathcal E_R(u,v)\leq \mathcal E_R(\overline u,\overline v), \qquad \mbox{for any } \overline u\in D_R(u) \mbox{ and any } \overline v\in D_R(v). $$
\end{defn}

Now, we are in the position to state our first monotonicity result.
\begin{theorem}\label{T1}
Let~$u,v\in C^{1}(\Omega)$, and let~$F$ satisfy~\eqref{F1} and~\eqref{F2}.
Suppose that~$(u,v)$ is an~$e_n$-minimizer for the energy~$\mathcal E$,
and that the growth condition~\eqref{growth} is satisfied.

Then, $u$ and $v$ are monotone on each line in the~$e_n$-direction,
i.e., for any~$x\in\Omega$, either~$u_n(x+te_n)\geq0$ or~$u_n(x+te_n)\leq0$,
and either~$v_n(x+te_n)\geq0$ or~$v_n(x+te_n)\leq0$, for any~$t\in\R$.
\end{theorem}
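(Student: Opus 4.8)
The plan is to adapt the sliding/comparison strategy of \cite{SaVa} to the vectorial setting, exploiting that both the domain $\Omega=V\times\R$ and the functional $F$ are translation-invariant in the $e_n$-direction. For $\tau\in\R$ write $u^\tau(x):=u(x+\tau e_n)$ and $v^\tau(x):=v(x+\tau e_n)$; by translation invariance, $(u^\tau,v^\tau)$ is again an $e_n$-minimizer with the same energy on every $\Omega\cap B_R$ after the obvious change of variables. The goal is to show that for each $\tau>0$ one has either $u^\tau\ge u$ everywhere or $u^\tau\le u$ everywhere (and similarly for $v$); letting $\tau\to0^+$ this gives that $u_n$ and $v_n$ do not change sign on $e_n$-lines, which is exactly the asserted monotonicity.

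First I would set up the comparison. Suppose, for contradiction, that the dichotomy fails: then the open sets $\{u^\tau>u\}$ and $\{u^\tau<u\}$ are both nonempty (or the analogous sets for $v$). Consider the competitors $\underline u:=\min\{u,u^\tau\}$, $\overline u:=\max\{u,u^\tau\}$ and likewise $\underline v,\overline v$. These are piecewise $e_n$-Lipschitz deformations of $u$ and $v$ once one truncates: the key technical point is that $u^\tau$ itself is not a compact perturbation of $u$, so one must cut it off. Following \cite{SaVa}, I would introduce a Lipschitz cutoff $\varphi_R$ with $\varphi_R\equiv\tau$ on $B_{R}$, $\varphi_R\equiv0$ outside $B_{2R}$, and $\|\partial_n\varphi_R\|_{L^\infty}<1$ (possible for $R$ large compared to $\tau$), and replace $u^\tau$ by $u(x+\varphi_R(x)e_n)$, which lies in $D_R'(u)$ for a suitable radius $R'$; similarly for $v$. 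Then $\min$ and $\max$ of these with $u$, $v$ respectively are admissible competitors by property i) of the Proposition, and the $e_n$-minimality of $(u,v)$ applies.

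The heart of the argument is the energy estimate. Write the total energy on $\Omega\cap B_{2R}$ of the pair $(\overline u,\overline v)$ plus that of $(\underline u,\underline v)$ and compare with twice the energy of $(u,v)$. On the region $B_R$ the truncation is the genuine translate, so by translation invariance of $F$ one gets a clean cancellation up to the ``gluing'' term; on the annulus $B_{2R}\setminus B_R$ the difference between $u(x+\varphi_R e_n)$ and $u^\tau$ is controlled, and here the growth condition \eqref{growth} together with the Taylor expansion of $F$ in $p_1,p_2$ — using precisely the doubling hypotheses \eqref{F2} on $F_{p_1p_1},F_{p_2p_2},F_{p_1p_2}$ and uniform convexity — bounds the error by $C\tau^2 R^{-2}\cdot R^2 \cdot(\text{something}\to0)$, hence an error that vanishes as $R\to\infty$. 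The convexity of $F$ in $(p_1,p_2)$ is what guarantees that passing to $\min$/$\max$ does not increase the leading-order energy (the standard fact that $\mathcal E(\underline u,\underline v)+\mathcal E(\overline u,\overline v)\le \mathcal E(u,v)+\mathcal E(u^\tau,v^\tau)$ on sets where the gradients of the two pairs agree up to relabeling), with strict gain on any set where $u^\tau\neq u$ and both are non-constant in $e_n$. Combining, $e_n$-minimality forces the strict-gain set to be null, i.e. on $\{u^\tau>u\}$ one must have $u_n\equiv 0\equiv u^\tau_n$, and a connectedness/continuation argument (the level set $\{u^\tau=u\}$ cannot separate $\Omega$ into a region where $u^\tau>u$ and one where $u^\tau<u$ unless $u$ is $e_n$-constant there) upgrades this to the clean dichotomy.

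The main obstacle I anticipate is making the annular error estimate rigorous in the \emph{system} setting: one needs the mixed term $|F_{p_1p_2}||\nabla u||\nabla v|$ to be absorbed correctly, and the Taylor remainder of $F$ as a function of the \emph{pair} $(p_1,p_2)$ to be controlled by the three quantities appearing in \eqref{growth}; this is exactly why the doubling conditions \eqref{F2} are stated for all three second-derivative blocks and why \eqref{growth} has precisely those three terms. A secondary subtlety is the care needed because $F_{p_1p_1}$ may degenerate where $p_1\cdot e_n=0$ (likewise $p_2$): the uniform convexity is only assumed off that set, so the argument that $\min$/$\max$ strictly lowers energy must be localized to the region where $u_n\neq0$ (resp. $v_n\neq0$), which is harmless since that is precisely the region relevant to proving monotonicity. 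Finally, one should check that properties i)–iv) of the Proposition suffice to keep all competitors inside the admissible class $D_R(\cdot)$ after the finitely many $\min$/$\max$ operations; property ii) on composition of deformations is what controls the accumulated Lipschitz constant.
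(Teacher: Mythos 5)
Your proposal captures the broad outline of the Savin–Valdinoci sliding strategy (translate, truncate at infinity, compare via $\min/\max$, invoke minimality), which is indeed what the paper does, but it contains two genuine gaps, and one of them is fatal as written.

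First, and most seriously, the claimed ``strict gain'' from passing to $\min/\max$ does not exist. The identity
\[
\mathcal E_R\left(\min\{u^{\tau}_{\mathrm{cut}},u\},v\right)+\mathcal E_R\left(\max\{u^{\tau}_{\mathrm{cut}},u\},v\right)
=\mathcal E_R(u^{\tau}_{\mathrm{cut}},v)+\mathcal E_R(u,v)
\]
is an \emph{exact} equality (this is \eqref{64} in the paper), because a.e.\ point belongs to exactly one of $\{u^{\tau}_{\mathrm{cut}}>u\}$, $\{u^{\tau}_{\mathrm{cut}}<u\}$, $\{u^{\tau}_{\mathrm{cut}}=u\}$ and on each of these the integrand is simply relabeled. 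Convexity of $F$ in $(p_1,p_2)$ plays no role at this step and produces no strict drop; so ``minimality forces the strict-gain set to be null'' has no content. The actual strict drop in the paper comes from a separate local construction (Lemma \ref{lem1}): at a point where $u$ and $u^\tau$ cross transversally, $\max\{u,u^\tau\}$ has a corner in the $e_n$-direction, and one then cuts that corner with a single linear function in a large ball; the strict convexity of $F$ in $p_1$ at non-degenerate slopes is what gives a fixed energy gain $c>0$, \emph{independent of $R$}. Without an ingredient of this kind, your argument does not close — you would only recover the exact identity and no contradiction.

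Second, your cutoff is too crude. Using a Lipschitz $\varphi_R$ that is $\tau$ on $B_R$ and $0$ off $B_{2R}$, the resulting annular error is of order $\tau^2R^{-2}\cdot\int_{B_{2R}\setminus B_R}(|F_{p_1p_1}||\nabla u|^2+\dots)$, and under the growth hypothesis \eqref{growth} this is $O(\tau^2)$, \emph{not} $o(\tau^2)$ as $R\to\infty$. You acknowledge the need for a factor tending to zero but do not supply it. The paper supplies it by taking the logarithmic cutoff \eqref{fiR}: since $\varphi_R'(s)=-2/(s\log R)$ on $\sqrt R<s<R$, the error picks up a factor $(\log R)^{-2}$, and a dyadic/polar-coordinate estimate (\eqref{polarcord}) then shows the whole error is $O(t^2/\log R)\to 0$. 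Replacing your annular cutoff with this logarithmic one is not a cosmetic change; under a bare quadratic growth condition it is the only way to make the perturbation-at-infinity error vanish.

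A smaller point: the dichotomy you aim at — ``for each $\tau>0$, either $u^\tau\ge u$ everywhere or $u^\tau\le u$ everywhere'' — is a global statement that is strictly stronger than the theorem's conclusion. Theorem \ref{T1} only asserts monotonicity along each $e_n$-line, and the monotonicity direction may vary from line to line; the paper deliberately works with a pointwise contradiction (a non-monotone line gives a transversal crossing, hence a corner, hence Lemma \ref{lem1} applies) rather than aiming at a global dichotomy, which need not hold in this generality.

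So: keep the overall plan (truncate the translate, use the $\min/\max$ identity, compare against $\mathcal E_R(u,v)$ by minimality), but replace the linear cutoff with the logarithmic one to kill the annular error, and replace the nonexistent ``strict gain from $\min/\max$'' with the corner-cutting lemma that produces a fixed local energy drop at a transversal crossing. With those two repairs the argument matches the paper's.
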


\begin{rem}\label{import}
We notice that if a continuous function~$w$ is monotone on each line
in~$\R^n$, then it is one-dimensional, that is there exist a function~$w_0:\R\rightarrow\R$
and a unit direction~$\omega\in\S^{n-1}$ such that~$w(x)=w_0(\omega\cdot x)$
(see Section~$9$ in~\cite{SaVa} for a proof of this fact).
\end{rem}

\subsection{A monotonicity result for stable solutions}

The second result that we state concerns stable critical points
of the energy instead of~$e_n$-minimizers.
We give a definition of the stability condition that involves
the second variation of the functional~$\mathcal E$ for deformations
of the solution both in the~$e_n$-direction and in the vertical~$e_{n+1}$-direction.
Precisely, we give the following:

\begin{defn}
Given a function~$w$, a piecewise Lipschitz deformation of~$w$ in
the~$\left\lbrace e_n,e_{n+1}\right\rbrace$-directions is a function~$\tilde w$
defined as
$$ \tilde w=\overline w +\phi, $$
where~$\overline w\in D^{\delta}_R(w)$ and~$\phi$ is a Lipschitz function
with compact support in~$\Omega\cap B_R$, and~$\|\phi\|_{C^{0,1}(\Omega)}\leq\delta$.
In this case we write~$w\in\mathcal D^{\delta}_R(w)$.
\end{defn}

\begin{defn}\label{stab}
We say that~$(u,v)$ is an~$\left\lbrace e_n,e_{n+1}\right\rbrace$-stable
solution of~$\mathcal E$ if for every~$R>0$ and~$\epsilon>0$ there exists~$\delta>0$ depending
on~$R$, $\epsilon$, $u$ and $v$ such that, for any~$t\in(0,\delta)$, we have
that~$\mathcal E_R(u,v)$ is finite and
\begin{equation}\label{stabineq}
\mathcal E_R(\tilde u,\tilde v)-\mathcal E_R(u,v)\geq -\epsilon t^2, \qquad \mbox{for any } \tilde u\in\mathcal D^t_R(u) \mbox{ and any } \tilde v\in\mathcal D^t_R(v).
\end{equation}
\end{defn}

\begin{rem}\label{senss}
As we shall see later on (see Lemma \ref{sens}), this notion of $\{e_n,e_{n+1}\}$-stable solution shares intimate links with the classical notion of stability.
This actually follows from the same arguments as in the case of a single equation (see \cite{SaVa}).
Actually, we can infer directly from the definition that
\begin{enumerate}
\item $(u,v)$ is a classical minimizer of $\mathcal E$ $\Rightarrow$ $(u,v)$ is $\{e_n,e_{n+1}\}$-stable for $\mathcal E$;
\item $(u,v)$ is $\{e_n,e_{n+1}\}$-stable for $\mathcal E$ $\Rightarrow$ $(u,v)$ is a critical point for $\mathcal E$.
\end{enumerate}
This is because we allow perturbation in the $e_{n+1}$-direction.
\end{rem}

Next, we state our monotonicity result for~$\left\lbrace e_n,e_{n+1}\right\rbrace$-stable solutions.

\begin{theorem}\label{T2}
Let~$F\in C^{3,\alpha}(\R^{2n}\times\R^2\times V)$ satisfy~\eqref{F2} and $(u,v)$ be such that
\begin{align}
&\text{either}~~  u,v\in C^{0,1}(\Omega)~~\text{are convex} \nonumber \\
&\text{or}~~  u,v\in C^{1,1}(\Omega). \nonumber
\end{align}
Moreover, suppose that~$(u,v)$ is an~$\left\lbrace e_n,e_{n+1}\right\rbrace$-stable
solution of~$\mathcal E$, and that the growth condition~\eqref{growth} holds
true.

Then, $u$ and~$v$ are monotone in the~$e_n$-direction,
i.e. either~$u_n\geq 0$ or $u_n\leq 0$ and either~$v_n\geq0$ or $v_n\leq0$ in~$\Omega$.
\end{theorem}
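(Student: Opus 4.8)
The plan is to argue by contradiction, reducing the monotonicity of~$u$ (and, \emph{mutatis mutandis}, of~$v$) to the claim that for every~$\tau>0$ the functions~$u$ and~$u^\tau:=u(\cdot+\tau e_n)$ are ordered, i.e.\ $u^\tau\geq u$ on~$\Omega$ or~$u^\tau\leq u$ on~$\Omega$. Granting this, the sets $A^\pm:=\{\tau>0:\ \pm(u^\tau-u)\geq0 \text{ on }\Omega\}$ are closed (continuity of~$u$), cover~$(0,\infty)$, and are stable under addition (since $u^{\tau_1+\tau_2}=(u^{\tau_1})^{\tau_2}$). As $(0,\infty)$ is connected, either $A^+$ or~$A^-$ equals~$(0,\infty)$ --- which is exactly $u_n\geq0$, resp.\ $u_n\leq0$, in~$\Omega$ --- or else $A^+\cap A^-\neq\emptyset$, so~$u$ is periodic in the~$e_n$-direction; in that case a bisection argument (if $u^{\tau_0}=u$ and $u^{\tau_0/2}\geq u$ then $u=u^{\tau_0}\geq u^{\tau_0/2}\geq u$, hence $u^{\tau_0/2}=u$, and so on) together with the continuity of~$u$ forces~$u$ constant in~$e_n$, which is again monotone. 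So it suffices to prove the ordering claim, and for this I would follow~\cite{SaVa}: rather than handling the Euler--Lagrange system or the precise second variation, I would compare~$\mathcal E_R(u,v)$ with the energies of competitors built from translates of~$(u,v)$, using~\eqref{stabineq} for lower bounds and the growth assumption~\eqref{growth} for the matching upper bounds.

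\textbf{Competitors and an energy identity.} Fix~$\tau>0$ small and~$R$ large, let~$\psi$ be Lipschitz with compact support in~$B_R$, $\psi\equiv\tau$ on a fixed ball~$B_{R_0}$ and~$\|\psi\|_{C^{0,1}(\Omega)}\leq C\tau$ (so $\|\partial_n\psi\|_{L^\infty}<1$), and set $\bar u_\tau(x):=u(x+\psi(x)e_n)$, so that~$\bar u_\tau\in D^{C\tau}_R(u)$ and~$\bar u_\tau=u$ off~$B_R$. (If~$u,v\in C^{1,1}(\Omega)$ one also gets $\|\bar u_\tau-u\|_{C^{0,1}(\Omega)}\leq C\tau\|u\|_{C^{1,1}(\Omega)}$ by property~(iv) of the elementary properties recalled above; in the convex Lipschitz case one argues, as in~\cite{SaVa}, that such deformations keep the relevant one-sided behaviour under control.) By property~(i), $\bar U:=\min\{u,\bar u_\tau\}$ and~$\bar V:=\max\{u,\bar u_\tau\}$ still lie in~$D^{C\tau}_R(u)$, and --- crucially, because we only deform the first slot while keeping~$v$ fixed --- one has, a.e.\ on~$\Omega\cap B_R$,
\[
F(\nabla\bar U,\nabla v,\bar U,v,x')+F(\nabla\bar V,\nabla v,\bar V,v,x')=F(\nabla u,\nabla v,u,v,x')+F(\nabla\bar u_\tau,\nabla v,\bar u_\tau,v,x'),
\]
since on~$\{u<\bar u_\tau\}$ the two sides agree term by term, on~$\{u>\bar u_\tau\}$ after a transposition, and~$\{u=\bar u_\tau\}$ contributes only a null set. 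Integrating, $\mathcal E_R(\bar U,v)+\mathcal E_R(\bar V,v)=\mathcal E_R(u,v)+\mathcal E_R(\bar u_\tau,v)$. This is the scalar mechanism of~\cite{SaVa}, and it survives for systems precisely because one never takes min/max of~$u$ and of~$v$ simultaneously.

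\textbf{Bounds and conclusion.} The couples $(\bar U,v),(\bar V,v),(\bar u_\tau,v)$ are admissible in~\eqref{stabineq} (as $v\in\mathcal D^{C\tau}_R(v)$ and $\bar U,\bar V,\bar u_\tau\in\mathcal D^{C\tau}_R(u)$), so, after renaming~$\epsilon$, $\mathcal E_R(\bar U,v),\ \mathcal E_R(\bar V,v),\ \mathcal E_R(\bar u_\tau,v)\geq\mathcal E_R(u,v)-\epsilon\tau^2$. For the reverse bound I would expand $\mathcal E_R(\bar u_\tau,v)-\mathcal E_R(u,v)$ by the second-order Taylor formula with integral remainder along $t\mapsto(1-t)u+t\bar u_\tau$ (legitimate since $F\in C^{3,\alpha}$): the constant and linear terms cancel because $(u,v)$ is a critical point of~$\mathcal E$ (Remark~\ref{senss}(2)) and $\bar u_\tau-u$ is Lipschitz and supported in~$B_R$, leaving a remainder controlled by
\[
\int_{\Omega\cap B_R}\big(|F_{p_1p_1}|\,|\nabla(\bar u_\tau-u)|^2+|F_{p_1z_1}|\,|\nabla(\bar u_\tau-u)|\,|\bar u_\tau-u|+|F_{z_1z_1}|\,|\bar u_\tau-u|^2\big),
\]
with the Hessians of~$F$ evaluated along the segment. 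Since $|\nabla(\bar u_\tau-u)|\lesssim|u_n(\cdot+\psi e_n)|\,|\nabla\psi|+C\tau$ and~$|\nabla\psi|$ is small on its support, \eqref{F2} transfers the bound on~$F_{p_1p_1}$ along the segment back to~$F_{p_1p_1}(\nabla u,\dots)$, and then~\eqref{growth} yields $\mathcal E_R(\bar u_\tau,v)-\mathcal E_R(u,v)\leq C\epsilon\tau^2$ --- indeed, spreading~$\psi$ logarithmically over a dyadic range of scales and summing~\eqref{growth} over dyadic annuli one can let this constant vanish as $R\to\infty$. Combining with the identity and the three lower bounds, $|\mathcal E_R(\bar U,v)-\mathcal E_R(u,v)|$ and $|\mathcal E_R(\bar V,v)-\mathcal E_R(u,v)|$ are $\leq C\epsilon\tau^2$, so $\bar U,\bar V$ essentially saturate~\eqref{stabineq}. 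If~$u^\tau$ and~$u$ were not ordered, then for~$R$ large~$\bar U$ would have a genuine corner inside~$B_{R_0}$, and a further local modification of~$(\bar U,v)$ by a compactly supported Lipschitz~$\phi$ in the vertical~$e_{n+1}$-direction (this is where the extra freedom in Definition~\ref{stab} is used) would lower~$\mathcal E_R$ by an amount not absorbed into~$C\epsilon\tau^2$, contradicting~\eqref{stabineq}. Hence~$u^\tau$ and~$u$ are ordered for every~$\tau$, so~$u$ is monotone in the~$e_n$-direction; running the argument again with the roles of~$u$ and~$v$ exchanged (deforming the second slot, keeping~$u$ fixed) gives the monotonicity of~$v$.

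\textbf{Main obstacle.} The decisive step is the upper bound of the previous paragraph: one has only the \emph{integrated} bound~\eqref{growth} at hand, while~$F_{p_1p_1}(\nabla u,\dots)$ is not even pointwise controlled on the degeneracy set~$\{u_n=0\}$; this is exactly why the structural hypothesis~\eqref{F2} and the precise design of the deformation~$\psi$ (so that~$|\nabla(\bar u_\tau-u)|$ is comparable to~$|u_n|\,|\nabla\psi|$ on~$\operatorname{supp}\nabla\psi$, with~$|\nabla\psi|$ small) must be used in tandem, and why one passes to a logarithmic cutoff. The second, more technical, point is the final ``corner removal'': for a genuine~$e_n$-minimizer it is immediate, but for a merely~$\{e_n,e_{n+1}\}$-stable solution one must exhibit the lower-energy competitor inside~$\mathcal D^t_R$ and quantify its gain against the~$-\epsilon t^2$ slack in~\eqref{stabineq}, which is precisely why vertical perturbations are built into Definition~\ref{stab} and emphasized in Remark~\ref{senss}.
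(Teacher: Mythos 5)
Your global architecture is the right one — compare $(u,v)$ with an $e_n$-translate suitably truncated at infinity, use stability for lower bounds on the competitors, and, if $u_n$ changes sign, lower the energy by a local ``corner removal'' to get a contradiction. That is exactly the scheme of \cite{SaVa} that the paper adapts, and your preliminary reduction to an ordering claim for $u^\tau$ and $u$ (plus the connectivity and bisection argument) is a correct, if unnecessary, repackaging of ``$u_n$ changes sign.'' However, the crucial step — the upper bound $\mathcal E_R(\bar u_\tau,v)-\mathcal E_R(u,v)\leq C\epsilon\tau^2$ with $C\epsilon$ vanishing as $R\to\infty$ — is done with the wrong expansion, and this is where the proof breaks.

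You Taylor-expand $F$ along the \emph{function-space} segment $(1-t)u+t\bar u_\tau$, which produces an integral remainder involving $F_{p_1z_1}$ and $F_{z_1z_1}$ in addition to $F_{p_1p_1}$. The growth hypothesis \eqref{growth} controls only $|F_{p_1p_1}|\,|\nabla u|^2$, $|F_{p_2p_2}|\,|\nabla v|^2$ and $|F_{p_1p_2}|\,|\nabla u|\,|\nabla v|$; there is no hypothesis whatsoever on $F_{z_1z_1}$ or $F_{p_1z_1}$, so these terms cannot be estimated. Worse, even the $|F_{p_1p_1}|\,|\nabla(\bar u_\tau-u)|^2$ piece is not controlled: your own bound $|\nabla(\bar u_\tau-u)|\lesssim |u_n(\cdot+\psi e_n)|\,|\nabla\psi|+C\tau$ has a residual $C\tau$ term coming from $\nabla u(x+\psi(x) e_n)-\nabla u(x)$, which on the inner ball $B_{R_0}$ (where $\psi\equiv\tau$, so $|\nabla\psi|=0$) is a genuine $O(\tau)$ contribution; its square times $|F_{p_1p_1}|$ integrates to something of size $\tau^2\int_{B_{R_0}}|F_{p_1p_1}|$, which \eqref{growth} does not bound and which does not vanish as $R\to\infty$. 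So the assertion that the design of $\psi$ makes $|\nabla(\bar u_\tau-u)|$ comparable to $|u_n|\,|\nabla\psi|$ on $\operatorname{supp}\nabla\psi$ is false, and the whole upper bound collapses.

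What makes the estimate possible in the paper is not a Taylor expansion in function space but a \emph{bi-Lipschitz change of coordinates}: one sets $y=x+t\varphi_R(|x|)e_n$ with $\varphi_R$ the logarithmic cutoff of \eqref{fiR} and defines $u^{\pm}_{R,t}(y):=u(x)$, $v^{\pm}_{R,t}(y):=v(x)$. Changing variables in the energy integral, the $z_1,z_2,x'$ slots of $F$ are \emph{unchanged} (since $u^{\pm}(y(x))=u(x)$ and $y'=x'$), and the only modification is $\nabla_x u\mapsto\nabla_x u\cdot D_yx$ together with the Jacobian factor $1+\mathrm{tr}\,A$. Thus the Taylor remainder involves only $F_{p_1p_1},F_{p_2p_2},F_{p_1p_2}$ against increments of size $|A|\leq t|\varphi'_R|$, supported in the annulus $B_R\setminus B_{\sqrt R}$ — precisely the quantities controlled by \eqref{F2} and \eqref{growth}. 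Moreover the linear term in the expansion does \emph{not} require the Euler–Lagrange equation: it is cancelled by averaging the $u^+_{R,t}$ and $u^-_{R,t}$ deformations, which is why the estimate is stated for the symmetric combination $\mathcal E_R(u^+_{R,t},v)+\mathcal E_R(u^-_{R,t},v)-2\mathcal E_R(u,v)$. This averaged estimate at infinity, combined with the stability inequality applied to $u^+_{R,t}$ and to $\min\{u^-_{R,t},u\}$ and with the integral identity \eqref{64}, yields $\mathcal E_R(\max\{u^-_{R,t},u\},v)-\mathcal E_R(u,v)\leq 3\epsilon t^2$ directly, without any direct upper bound of the sort you are trying to prove.

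Finally, your ``corner removal'' at the end is asserted rather than proved, and this is where the hypotheses $F\in C^{3,\alpha}$ and (convexity or $C^{1,1}$ regularity of $u,v$) enter in an essential way: the paper's Lemma~\ref{beH} uses Schauder theory and a Hopf-type argument for the linearised system to produce a point where $u_n=0$ and $\nabla u_n\neq0$, and Lemma~\ref{lem2} then constructs the local Lipschitz perturbation (via a compactly supported $\psi^1$ acting in the $e_{n+1}$-direction, as allowed by Definition~\ref{stab}) that lowers the energy by a fixed $c\,t^2$ with $c>0$ independent of $R$ and $\epsilon$. Without this quantified, $R$-independent gain the final inequality $\mathcal E_R(\tilde f_{R,t},v)\leq\mathcal E_R(u,v)+(3\epsilon-c)t^2$ could not yield a contradiction.
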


By Theorem~\ref{T2} and Remark~\ref{import} one obtains that
an~$\left\lbrace e_n,e_{n+1}\right\rbrace$-stable solution of~$\mathcal E$
is one-dimensional, in the sense that
there exist $\overline u$, $\overline v:\R\to\R$ and $\omega_u$, $\omega_v\in\S^{n-1}$ such that
$(u(x),v(x))=\big(\overline u(\omega_u\cdot x), \overline v(\omega_v\cdot x)\big)$.
We remark that, in general, it is not possible to conclude that~$u$ and~$v$ have the same direction
of monotonicity. It is not true, for instance, for an uncoupled system of PDEs, such as
\begin{eqnarray*}
\left\{
\begin{array}{r c l}
\Delta u=0,\\
\Delta v=0.
\end{array}
\right.
\end{eqnarray*}
See also Remark~1.4 in~\cite{Di} for a discussion on this fact.

On the other hand, there are cases in which it is possible to obtain
that~$u$ and~$v$ have the same direction of monotonicity, even for quite general systems,
see e.g.~\cite{Di} and~\cite{pinamonti}.

We stress that the aim of this paper is to generalize
to systems a method to obtain monotonicity properties
that adapts to quite general settings.

\subsection{Organization of the paper} In Section~\ref{sec:stab}
we show that, under suitable assumptions, the notion of
$\left\lbrace e_n,e_{n+1}\right\rbrace$-stability is equivalent to the
notion of classical stability.
In Section~\ref{sec:local} we perform a local analysis and, in Section~\ref{sec:infi},
we estimate the energy of the perturbation at infinity,
collecting the basic facts in order to prove Theorems~\ref{T1} and~\ref{T2} in
Sections~\ref{sec:proof} and~\ref{sec:proof2}, respectively.
Finally, in Section~\ref{applications} we investigate some applications
of our results in some concrete cases.

\section{Stability property}\label{sec:stab}

The following lemma justifies Remark \ref{senss}.

\begin{lemma}\label{sens}
If $\Omega=\mathbb{R}^n$, $F\in C^2$ and $u$, $v\in C^2(\mathbb{R}^n)$,
then $(u,v)$ is a classical stable solution for the energy functional $\mathcal E$
if, and only if, $(u,v)$ is an $\left\lbrace e_n,e_{n+1}\right\rbrace$-stable
solution of~$\mathcal E$ according to Definition~\ref{stab}.
\end{lemma}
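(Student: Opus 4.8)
The plan is to reduce both notions of stability to a statement about the sign of the second variation of $\mathcal E_R$, and to show that the quadratic form appearing in the classical stability condition is the only term that matters, the first-order and higher-order contributions being harmless. Concretely, write an $\{e_n,e_{n+1}\}$-perturbation of $(u,v)$ as $\tilde u = u(x+\varphi^{(i)}(x)e_n) + \phi_1(x)$ and similarly for $\tilde v$, with all the functions bounded by $t$ in $C^{0,1}$ and supported in $B_R$. The first step is to Taylor-expand $\mathcal E_R(\tilde u,\tilde v) - \mathcal E_R(u,v)$ in $t$. Since $F\in C^2$ and $u,v\in C^2(\R^n)$, one gets
\begin{equation*}
\mathcal E_R(\tilde u,\tilde v) - \mathcal E_R(u,v) = L(\psi) + \tfrac12 Q(\psi) + o(t^2),
\end{equation*}
where $\psi = (\psi_1,\psi_2)$ collects the ``net'' first-order displacements $\psi_1 = \varphi^{(i)}\, \partial_n u + \phi_1$, $\psi_2 = \varphi^{(i)}\, \partial_n v + \phi_2$ (reading off the effect of the domain deformation as in \cite{SaVa}), $L$ is the linear functional given by the first variation, and $Q$ is the second-variation quadratic form. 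The key observation is that allowing $\phi_1,\phi_2$ to be \emph{arbitrary} Lipschitz functions with compact support in $B_R$ means that, as the deformation data range over all admissible choices, $\psi_1$ and $\psi_2$ range over all Lipschitz (indeed $C^1_c$, by density) functions supported in $\Omega\cap B_R$ with norm $\lesssim t$.

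Next I would handle the two implications. For ``$\{e_n,e_{n+1}\}$-stable $\Rightarrow$ classically stable'': by Remark~\ref{senss}(2), $(u,v)$ is a critical point, so $L\equiv 0$; hence the inequality \eqref{stabineq} reads $\tfrac12 Q(\psi) + o(t^2) \geq -\epsilon t^2$ for all admissible $\psi$ of size $\le t$. Given any fixed pair of test functions $(\xi_1,\xi_2)\in C_c^\infty(\Omega\cap B_R)^2$, plug in $\psi = t\,(\xi_1,\xi_2)/\|(\xi_1,\xi_2)\|$ (which is admissible for $t$ small, by choosing $\phi_i$ appropriately and $\varphi^{(i)}\equiv 0$); using the quadratic homogeneity $Q(t\eta) = t^2 Q(\eta)$ and dividing by $t^2$, letting $t\to 0$ and then $\epsilon\to 0$ gives $Q(\xi_1,\xi_2)\ge 0$, which is exactly the classical stability inequality. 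For the converse, ``classically stable $\Rightarrow$ $\{e_n,e_{n+1}\}$-stable'': a classical stable solution is in particular a critical point, so again $L\equiv 0$; then for any $\{e_n,e_{n+1}\}$-deformation with data of size $t$, the induced $\psi$ has $\|\psi\|_{C^{0,1}} \le C t$ (using $u,v\in C^2$ for the deformation part, cf.\ property (iv) of the Proposition), so $\tfrac12 Q(\psi) \ge 0$ and the remainder is $o(t^2)$, uniformly over such $\psi$; hence $\mathcal E_R(\tilde u,\tilde v) - \mathcal E_R(u,v) \ge o(t^2) \ge -\epsilon t^2$ once $\delta$ is small enough, which is \eqref{stabineq}.

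The main obstacle I anticipate is making the expansion genuinely \emph{uniform} in the perturbation: the $o(t^2)$ must be controlled by a modulus depending only on $R$, $\|u\|_{C^2(B_{R+1})}$, $\|v\|_{C^2(B_{R+1})}$ and the local $C^2$-modulus of $F$ on the relevant compact set of arguments, not on the particular $\varphi^{(i)},\phi_j$. This is where the hypotheses $F\in C^2$ and $u,v\in C^2$ are used: they guarantee that $\nabla\tilde u,\nabla\tilde v$ stay in a fixed compact set (for $t\le 1$, say) and that $F$ and its second derivatives have a uniform modulus of continuity there, so the second-order Taylor remainder of $F$ along the perturbation is $o(t^2)$ with a uniform rate — exactly as in the scalar case treated in \cite{SaVa}, to which I would appeal for the routine estimates. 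A secondary technical point is the bookkeeping that turns the $e_n$-domain deformation $u(x+\varphi e_n)$ into the additive perturbation $\varphi\,\partial_n u + O(t^2)$ at the level of the energy; this is a direct Taylor expansion in $\varphi$ (licit since $u\in C^2$) and contributes only to $L$ and $Q$ plus a uniform $o(t^2)$, so it does not affect the argument above.
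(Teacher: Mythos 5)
Your proposal is essentially the same argument as the paper's: Taylor-expand the energy, use criticality to kill the linear term, and reduce both stability notions to the sign of the same second-variation quadratic form, with the uniformity of the $o(t^2)$ remainder being the only technical point to watch. The one place you make the argument slightly more complicated than the paper does is in the reduction of a domain deformation to a vertical perturbation: you introduce the linearized displacement $\psi_1=\varphi^{(i)}\partial_n u+\phi_1$ (requiring a second Taylor expansion, of $u(x+\varphi^{(i)}e_n)$ in $\varphi^{(i)}$, whose $C^{0,1}$ error is only $o(t)$ when $u\in C^2$ and so needs a small extra argument to see that it contributes $o(t^2)$ to the energy), whereas the paper simply sets $\varphi^1:=(\tilde u-u)/t$ and $\varphi^2:=(\tilde v-v)/t$ directly, observes via property (iv) of the Proposition that these have $t$-independent Lipschitz bounds, and plugs them into the already-established expansion $\mathcal E_R(u+t\varphi^1,v+t\varphi^2)-\mathcal E_R(u,v)\ge o(t^2)$ — no linearization of the domain deformation is needed. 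Also, for the converse direction the paper does not first pass through criticality and the sign of $Q$: it simply substitutes $\tilde u=u+t\varphi^1$, $\tilde v=v+t\varphi^2$ into \eqref{stabineq}, divides by $t^2$, and sends $\epsilon\to0$, which is a shade more direct than your route via $L\equiv 0$ and $Q\ge0$. None of these differences is a gap; both arguments are correct.
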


\begin{proof}
Suppose first that $(u,v)$, with $u$, $v\in C^2(\mathbb{R}^n)$, is a classical stable solution.
That is, by definition,
\begin{equation}
\liminf_{t\to0}\frac{\mathcal E_R(u+t\varphi^1,v+t\varphi^2)-\mathcal E_R(u,v)}{t^2}\geq 0
\quad {\mbox{ for any }} \varphi^1,\varphi^2\in C^{0,1}_0(B_R). \label{stabbb}
\end{equation}
In addition, setting
\begin{equation}\label{9910-1}
\Delta_{\varphi^1,\varphi^2}^1 F:=F(\nabla u+t\nabla\varphi^1,\nabla v+
t\nabla\varphi^2,u+t\varphi^1,v+t\varphi^2,x')-F(\nabla u,\nabla v,u,v,x'),
\end{equation}
we have
\begin{equation}\label{9910}
\Delta_{\varphi^1,\varphi^2}^1
F=tH(\varphi^1,\varphi^2)+\frac{t^2}{2}L(\varphi^1,\varphi^2)+o(t^2),
\end{equation}
where
\begin{eqnarray*}
&& H(\varphi^1,\varphi^2):= F_{(p_1)_i}\varphi_i^1+F_{z_1}\varphi^1+F_{(p_2)_i}\varphi_i^2
+F_{z_2}\varphi^2 \\
{\mbox{and }} && L(\varphi^1,\varphi^2):=
F_{(p_1)_i(p_1)_j}\varphi_i^1\varphi_j^1+F_{z_1z_1}(\varphi^1)^2+
2F_{(p_1)_i z_1}\varphi^1\varphi_i^1 \\
&&\qquad\qquad\qquad +F_{(p_2)_i(p_2)_j}\varphi_i^2\varphi_j^2
+F_{z_2z_2}(\varphi^2)^2+2F_{(p_2)_i z_2}\varphi^2\varphi_i^2  \\
&&\qquad\qquad\qquad +F_{(p_1)_i(p_2)_j}\varphi_i^1\varphi_j^2+2F_{z_2z_1}
\varphi^1\varphi^2+2F_{(p_2)_i z_1}\varphi^2\varphi_i^2 \\
&&\qquad\qquad\qquad +F_{(p_2)_i(p_1)_j}\varphi_i^2\varphi_j^1+
2F_{(p_1)_i z_2}\varphi^2\varphi_i^1
\end{eqnarray*}
and the derivatives of $F$ are evaluated at $(\nabla u,\nabla v,u,v,x')$.

Now we observe that, since $(u,v)$ is a critical point of $\mathcal{E}$,
$$ \int_{B_R} H(\varphi^1,\varphi^2) =0.$$
So integrating \eqref{9910} and recalling \eqref{9910-1}, we obtain that
\begin{equation}
\mathcal E_R(u+t\varphi^1,v+t\varphi^2)-\mathcal E_R(u,v)
=\frac{t^2}{2}\int_{B_R}L(\varphi^1,\varphi^2)\mathrm{d}x+o(t^2). \label{eggg}
\end{equation}
Dividing by $t^2$ and recalling \eqref{stabbb} we find that
\[\int_{B_R}L(\varphi^1,\varphi^2)\mathrm{d}x\geq 0. \]
Thus, \eqref{eggg} yields
\begin{equation}\label{234ert}
\mathcal E_R(u+t\varphi^1,v+t\varphi^2)-\mathcal E_R(u,v)\geq o(t^2). \end{equation}
Given $\tilde{u}\in\mathcal D_R^t(u)$ and $\tilde{v}\in \mathcal D_R^t(v)$
we may choose $\varphi^1:=\frac{\tilde{u}-u}{t}$ and $\varphi^2:=\frac{\tilde{v}-v}{t}$.
By construction, the Lipschitz norm of $\varphi^1$ and $\varphi^2$
is bounded by a quantity that does not depend on $t$.
Therefore \eqref{234ert} implies \eqref{stabineq},
and so it follows that $(u,v)$ is $\{e_n,e_{n+1}\}$-stable.

Reciprocally, let $(u,v)$ be an $\{e_n,e_{n+1}\}$-stable solution
of $\mathcal E$ (and so, a critical point).
In this case, we choose $\tilde{u}:=u+t\varphi^1$ and $\tilde{v}:=v+t\varphi^2$
in \eqref{stabineq}. Then, taking $\epsilon$ arbitrarily small, we prove \eqref{stabbb}.
Therefore $(u,v)$ is a stable solution of $\mathcal{E}$.
This completes the proof.
\end{proof}

\section{Local analysis}\label{sec:local}

In this section we will perform the local perturbation of a couple $(u,v)$.
More precisely, we will show that we can perturb the couples
$$ \left(\max\{u(x), u(x)+t e_n\}, v(x)\right) \quad {\mbox{ and }}
\quad \left(u(x), \max\{v(x), v(x)+t e_n\}\right) $$
in such a way that the energy of the ``perturbed couples'' decreases.

We first show that if one of the two elements of a couple $(u,v)$
is a maximum of two functions that form an angle at an intersection point,
then it cannot be an $e_n$-minimizer for $\mathcal{E}$.

\begin{lemma}\label{lem1}
Suppose that~$0\in\Omega$, and that~$u,u^1,v,v^1$ are~$C^1$ functions which satisfy
\begin{equation}\label{ip900}
u(0)=u^1(0), \qquad u^1_n(0)<0<u_n(0),
\end{equation}
and
$$ v(0)=v^1(0), \qquad v_n(0)<0<v^1_n(0). $$

Then, the couples~$g_1:=\left(\max\left\lbrace u,u^1\right\rbrace,v\right)$
and~$g_2:=\left(u,\max\left\lbrace v,v^1\right\rbrace\right)$ are
not~$e_n$-minimizers for~$\mathcal E$ in any ball~$B_\eta$ with~$\eta>0$.
\end{lemma}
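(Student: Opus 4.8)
The plan is to show that the presence of a genuine ``corner'' in $\max\{u,u^1\}$ (or $\max\{v,v^1\}$) at the origin allows us to round it off by an $e_n$-Lipschitz deformation and strictly decrease the energy, contradicting $e_n$-minimality. I will treat $g_1=\big(\max\{u,u^1\},v\big)$; the argument for $g_2$ is symmetric. Write $w:=\max\{u,u^1\}$. By \eqref{ip900}, near $0$ we have $u>u^1$ on one side of a smooth hypersurface $\{u=u^1\}$ through the origin and $u<u^1$ on the other, and since $u_n(0)>0>u^1_n(0)$, the graphs of $u$ and $u^1$ cross transversally in the $e_n$-direction: along the line $t\mapsto (x',t)$ through a point near $0$, $w$ switches from following one function to the other, producing a downward corner (a concave kink) in the $t$-variable. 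The key geometric observation is that replacing $w$ by the value of $u$ (resp.\ $u^1$) obtained after a small vertical shift $x\mapsto x+\varphi(x)e_n$ lets us ``slide'' the crossing locus and cut the corner.

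Concretely, first I would set up coordinates so that the crossing hypersurface $\Sigma=\{u=u^1\}$ is, near $0$, a Lipschitz (indeed $C^1$) graph over $V$ in the $e_n$-variable, say $\Sigma=\{x_n=\psi(x')\}$ with $\psi(0)=0$. On $B_\eta$ for $\eta$ small, $w(x)=u(x)$ for $x_n>\psi(x')$ and $w(x)=u^1(x)$ for $x_n<\psi(x')$. Next, I would build a competitor of the form $\overline w(x)=w\big(x+\varphi(x)e_n\big)$ with $\varphi$ Lipschitz, compactly supported in $B_\eta$, and $\|\partial_n\varphi\|_\infty<1$, chosen so that the new crossing surface is $\{x_n=\psi(x')-\varphi(x',\psi(x'))\}$ — i.e., $\varphi$ pushes the kink locus in a way that, on a small region, $\overline w$ coincides with the \emph{lower} of the two smooth functions extended across where the corner used to be, thereby removing curvature/energy concentrated at the kink. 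Since $F$ is convex in $p_1$ (uniformly so where $p_1\cdot e_n\ne0$) and continuous, and since $u,u^1$ are genuinely transversal at $0$, the corner carries a definite amount of ``excess'' convex energy: comparing $\int F(\nabla w,\nabla v,w,v,x')$ with $\int F(\nabla\overline w,\nabla v,w,v,x')$ — note the $v$-slot is unchanged because we only deform the first component — the difference is controlled from below by the strict convexity gain across the jump in the gradient direction. Actually, the cleanest route is the standard one: the function $\max\{u,u^1\}$ is \emph{not} a weak solution of the Euler–Lagrange equation near $0$ (the distributional term supported on $\Sigma$ has a sign coming from $u_n(0)>0>u^1_n(0)$), so it cannot minimize even among small $C^1$-close competitors; one then checks that a suitable rounding lies in $D_\eta(w)$, or can be approximated by elements of $D_\eta(w)$, so that $e_n$-minimality of $g_1$ is violated. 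I would make this rigorous by exhibiting an explicit one-parameter family $\overline w_s\in D_\eta(w)$ with $\frac{d}{ds}\big|_{s=0^+}\mathcal E_\eta(\overline w_s,v)<0$.

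The main obstacle I anticipate is \emph{ensuring the rounding competitor genuinely belongs to} $D_\eta(w)$, i.e.\ is a (piecewise) $e_n$-Lipschitz deformation of $w$ with the constraint $\|\partial_n\varphi\|_\infty<1$, rather than an arbitrary perturbation: one must exploit precisely the transversality hypotheses $u^1_n(0)<0<u_n(0)$ and $v_n(0)<0<v^1_n(0)$ to guarantee that the level sets $\{u=c\}$ and $\{u^1=c\}$ are locally graphs in $x_n$ with slopes bounded below $1$ after rescaling, so that the sliding map is an admissible deformation. The second delicate point is quantifying the strict energy decrease using only the hypotheses on $F$ in \eqref{F1}–\eqref{F2}: the uniform convexity in $p_1$ holds only where $p_1\cdot e_n\ne0$, but that is exactly guaranteed on $\Sigma$ near $0$ since $u_n(0),u^1_n(0)\ne0$, so by continuity $\nabla u\cdot e_n$ and $\nabla u^1\cdot e_n$ stay bounded away from $0$ on $B_\eta$ for $\eta$ small — this is where the $C^1$ hypothesis on $u,u^1,v,v^1$ is used. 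Once these two points are in place, the contradiction with $e_n$-minimality is immediate, and the same computation with the roles of the components swapped (using $v_n(0)<0<v^1_n(0)$) handles $g_2$.
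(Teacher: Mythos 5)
Your proposal identifies the correct overall goal (argue by contradiction, build a lower-energy competitor in $D_\eta(w)$, and correctly flags the two delicate points: admissibility and the role of $u_n(0)>0>u^1_n(0)$ in making $p_1\cdot e_n\neq 0$ near $0$). But the mechanism by which energy is actually lowered is not established, and the route you sketch has a real gap.

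First, ``sliding the crossing locus'' is not, by itself, an energy-lowering operation. An $e_n$-Lipschitz deformation $\overline w(x)=w(x+\varphi(x)e_n)$ does not smooth the kink; it simply transports the graph. In particular, a competitor obtained by moving the set $\{u=u^1\}$ still has a corner in the $e_n$-direction and there is no convexity gain to speak of without further structure. The paper's competitor is different: after a blow-up $x\mapsto rx$, $r\to 0^+$, the problem reduces to the max of two \emph{linear} functions $u_0,u^1_0$, and the corner is cut by an explicit affine function $h_0:=1+\alpha u_0+(1-\alpha)u^1_0-\beta_R(x')$ whose gradient is a \emph{convex combination} of $\nabla u_0$ and $\nabla u^1_0$. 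The strict convexity of $F$ in $p_1$ then gives $F_0(\nabla h_0,\nabla v_0)< F_0(\nabla f_0,\nabla v_0)-\eta$ on a strip, which, with $R$ large, dominates the transition-layer cost. Crucially, this works only after a normalization step the proposal omits: one subtracts a linear functional $p_0\cdot p_1$ from $F$ (which does not change $e_n$-minimality, since it only contributes a boundary term) so that $F_0(\nabla u(0),\nabla v(0))=F_0(\nabla u^1(0),\nabla v(0))$. Without this, the two endpoints of the convex combination may have very different $F_0$-values and the convexity argument gives nothing.

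Second, the alternative heuristic you call ``the cleanest route'' --- that $\max\{u,u^1\}$ is not a weak solution because of a signed distributional term on $\Sigma$, hence not a minimizer --- is not how the paper proceeds and has its own gap in this setting: $e_n$-minimality is minimality only among \emph{domain deformations} $\overline w(x)=w(x+\varphi(x)e_n)$, a proper subclass of general $H^1$ perturbations, so failure of the Euler--Lagrange equation does not immediately yield failure of $e_n$-minimality. This is exactly why the paper verifies, via the Implicit Function Theorem applied to $u_r(x',x_n+\varphi(x))=h_0(x)$ in the region $\{h_0>f_r\}$, that its competitor is admissible with $\|\varphi\|_{C^{0,1}}$ small. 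That verification is a substantive part of the proof, and your proposal leaves it as a promised but unperformed step.

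Finally, two smaller points: the blow-up argument also supplies the dichotomy of scales ($r$ small to stay inside $B_\eta$, $R$ large to beat the boundary contribution) without which the ``local convexity excess'' cannot easily be made to dominate error terms; and the kink of $\max\{u,u^1\}$ along $e_n$ is convex (the normal derivative jumps from $u^1_n(0)<0$ up to $u_n(0)>0$), not a ``concave kink'' as written --- this is why cutting it from above with $h_0$ is the right move.
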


\begin{proof}
We prove the lemma for the couple~$g_1$, the proof for~$g_2$ being
similar with obvious modifications.

We argue by contradiction and we assume that~$g_1$ is an~$e_n$-minimizer in a ball~$B_\eta$,
for some~$\eta>0$.
We define~$F_0(p_1,p_2):=F(p_1,p_2,0,0,0)$.
Moreover, we observe that if
we subtract a linear functional from~$F$ then this does not affect the minimality.
Indeed, we can consider
$$ \tilde F(p_1,p_2,z_1,z_2,x):=F(p_1,p_2,z_1,z_2,x) - p_0\cdot p_1, $$
then, if~$\tilde{\mathcal E}$ is the associated energy functional,
we have
$$  \mathcal E_R(u,v)-\tilde{\mathcal E}_R(u,v)=\int_{B_R}p_0\cdot\nabla u\, dx =\int_{\partial B_R}p_0 u\cdot\nu\, d\sigma. $$
This means that the difference between~$\mathcal E_R(u,v)$ and~$\tilde{\mathcal E}_R(u,v)$
is a term which depends only on the boundary value of~$u$.
Therefore, if~$(u,v)$ is an~$e_n$-minimizer for~$\mathcal E$, then it is
an~$e_n$-minimizer for~$\tilde{\mathcal E}$ as well.
From this we deduce that we may assume
\begin{equation}\label{wqerr4343}
F_0(\nabla u(0),\nabla v(0))=F_0(\nabla u^1(0),\nabla v(0)). \end{equation}

Moreover, we can suppose that~$u(0)=u^1(0)=0$; for this,
it is sufficient to translate~$F$ in the variable~$z_1$.

Now, for small~$r>0$, we define the rescaled functions
$$ u_r(x):=r^{-1}u(rx), \qquad u^1_r(x):=r^{-1}u^1(rx) \quad {\mbox{ and }} \quad
v_r(x):=r^{-1}v(rx). $$
We also set~$g_r:=\left(\max\left\lbrace u_r,u^1_r\right\rbrace, v_r\right)$.
If
$$ F_r(p_1,p_2,z_1,z_2,x):= F(p_1,p_2,rz_1,rz_2,rx) $$
is the rescaled functional, we have that~$g_r$ is an~$e_n$-minimizer
for~$F_r$ in~$B_{\eta/r}$.
Sending~$r\rightarrow0^+$, we obtain the following limits, which are uniform on compact sets:
\begin{equation}\begin{split}\label{unif}
& F_r \rightarrow F_0(p_1,p_2), \\
& u_r(x) \rightarrow u_0(x):= \nabla u(0)\cdot x, \qquad \nabla u_r \rightarrow \nabla u_0, \\
& u^1_r(x)\rightarrow u^1_0(x):=\nabla u^1(0)\cdot x, \qquad \nabla u^1_r\rightarrow\nabla u^1_0, \\
& v_r(x)\rightarrow v_0(x):=\nabla v(0)\cdot x, \qquad \nabla v_r\rightarrow\nabla v_0.
\end{split}\end{equation}

Now, we set~$f_0:=\max\left\lbrace u_0,u^1_0\right\rbrace$ and we consider
the couple~$g_0=(f_0,v_0)$.
The fact that~$F$ is strictly convex in the variable~$p_1$ implies that
\begin{equation}\label{agg123}
{\mbox{$g_0$ is not a minimizer for~$F_0$.}}\end{equation}
To see this,
given the function
$$\beta_R(x'):=c \delta \max\left\lbrace 0,|x'|-R\right\rbrace, $$
with~$R$ large and $c, \delta$ small, we define
\begin{equation}\label{conv99}
h_0:= 1+\alpha u_0 + (1-\alpha) u^1_0 - \beta_R(x'),
\end{equation}
where~$\alpha\in(0,1)$ is small.
Then, we have that
\begin{equation}\label{ufff45}
{\mbox{the couple $(\max\left\lbrace f_0,h_0\right\rbrace, v_0)$
agrees with~$g_0$ outside the ball~$B_{R+R_1}$.}}\end{equation}
On the other hand, in~$B_R$, considering~$h_0$, we cut the graphs of two
transversal linear functions by a single one.
Therefore, if we take~$R$ sufficiently large,
\begin{equation}\label{24356ty}
{\mbox{the couple $(\max\left\lbrace f_0,h_0\right\rbrace, v_0)$ has lower energy
for~$F_0$ than~$g_0$.}}\end{equation}
To see this, we observe that, by definition, $\nabla f_0$ coincides with either $\nabla u_0$
or $\nabla u_0^1$, and so from \eqref{wqerr4343} we have that
$$ F_0(\nabla u_0,\nabla v_0)=F_0(\nabla u^1_0,\nabla v_0) = F_0(\nabla f_0, \nabla v_0).$$
Therefore, by the strict convexity of $F_0$ with respect to the first variable, we have
that, fixed $\alpha$ and $\delta$,
\begin{equation}\begin{split}\label{qw432}
F_0(\nabla h_0, \nabla v_0) =\,& F_0\big( (\alpha\nabla u_0 +(1-\alpha)\nabla u_0^1, \nabla v_0\big)\\
\le\, & \alpha F_0(\nabla u_0,\nabla v_0) +(1-\alpha)F_0(\nabla u_0^1,\nabla v_0) -\eta \\
=\,& F_0(\nabla f_0,\nabla v_0)-\eta
\end{split}\end{equation}
in $B_R\cap \{h_0>f_0\}$, for some $\eta>0$. On the other hand,
the set $\{h_0>f_0\}$ is contained in a strip in the $e_n$-direction.
This and \eqref{ufff45} imply that, when we integrate \eqref{qw432} in $B_R$,
then the energy of $(\max\left\lbrace f_0,h_0\right\rbrace, v_0)$ is less than the energy of $g_0$
minus a term of order $\underline{C}\eta R^{n-1}$,
whereas, when we integrate in $B_{R+R_1}$,
the energy of $(\max\left\lbrace f_0,h_0\right\rbrace, v_0)$ is less than the energy of $g_0$
minus a term of order $\overline{C}\eta R^{n-2}$.
All in all, if $R$ is large enough we obtain \eqref{24356ty}.
In turn, this implies~\eqref{agg123}.

Now we set
\begin{equation}\label{fr}
f_r:=\max\left\lbrace u_r,u^1_r\right\rbrace.
\end{equation}
Since the convergence in~\eqref{unif} is uniform,
we have that
$$ h_r:= (\max\left\lbrace f_r, h_0\right\rbrace, v_r) $$
has lower energy for~$F_r$ than~$g_r$, as well.

Hence, we can scale back and obtain that~$h_*(x):=r h_r(x/r)$
has lower energy for~$F$ in~$B_{r(R+R_1)}\subseteq B_{\eta}$
than the one of~$g_1$.

In order to get a contradiction, it remains to prove
that~$h_*$ is an allowed perturbation of~$g_1$ (according to Definition \ref{pert}).
Notice that this is equivalent to check that~$h_r$ is an allowed perturbation of~$g_r$.
That is, recalling \eqref{fr}, we have to prove that
\begin{equation}\begin{split}\label{allowed}
&{\mbox{$\max\left\lbrace f_r,h_0\right\rbrace$ is
a piecewise Lipschitz domain deformation of~$f_r$}}\\
&{\mbox{with the Lipschitz norm bounded by~$\delta$.}}
\end{split}\end{equation}
To do this,
we recall the uniform convergence of~$u_r$ and~$u^1_r$ to~$u_0$ and~$u^1_0$, respectively,
given by \eqref{unif}, and the definition of $h_0$ given in \eqref{conv99} to obtain that
\begin{equation}\label{efrettg76}
h_0(x)= 1+\alpha u_r(x) + (1-\alpha) u^1_r(x) - \beta_R(x') +\omega_r(x),
\end{equation}
where $\omega_r\to0$ as $r\to0^+$ locally uniformly, together with its derivatives.

Now we notice that our hypothesis in \eqref{ip900} gives
that~$\nabla u^1_0\cdot e_n<0<\nabla u_0\cdot e_n$.
This, together with the uniform convergence in \eqref{unif},
implies that we can apply the Implicit Function Theorem, and we have that
the part of the graph of~$\max\left\lbrace f_r,h_0\right\rbrace$ where~$h_0>f_r$
can be obtained from~$u_r$ by a Lipschitz domain deformation with Lipschitz norm
less than~$\delta$, provided that we take~$\alpha$ sufficiently small.
Indeed, fixed $x'$ (and so looking at the 1-dimensional problem in the last variable only)
and recalling \eqref{efrettg76}, we obtain that
$$ u_r(x', x_n + \varphi(x)) = h_0(x) =
1+\alpha u_r(x) + (1-\alpha) u^1_r(x) - \beta_R(x') +\omega_r(x),$$
thanks to the Implicit Function Theorem in 1-dimension.
Furthermore, if $\alpha$ and $r$ are sufficiently small, the perturbation function $\varphi$
has norm bounded by $\delta/2$. This shows \eqref{allowed}
and finishes the proof of Lemma \ref{lem1}.
\end{proof}

Now we deal with perturbations of the couples
$$ \left(\max\{u(x), u(x)+t e_n\}, v(x)\right) \quad {\mbox{ and }}
\quad \left(u(x), \max\{v(x), v(x)+t e_n\}\right) $$
with lower energy.

\begin{lemma}\label{lem2}
Let~$u,v\in C^2(\Omega)$ be such that $(u,v)$ is a critical point for the functional~$\mathcal E$ in
a neighborhood of the origin, and let~$F\in C^2$ in a neighborhood
of $(\nabla u(0),\nabla v(0),u(0),v(0),0)$.
Suppose that
$$ u_n(0)=0, \qquad \nabla u_n(0)\neq 0, $$
and set
\begin{equation}\label{w1}
w^1(x):= \max\left\lbrace  u(x), u(x+te_n)\right\rbrace.
\end{equation}

Then, for every~$\eta>0$ there exists a function~$\psi^1$ which is Lipschitz and has
compact support in~$B_{\eta}$ such that, for any small~$t$,
$$ \mathcal E_{\eta}(w^1 +t\psi^1,v)-\mathcal E_{\eta}(w^1,v) \leq -c\, t^2, $$
where~$c>0$ is a small constant that depends on~$F$, $\eta$ and~$u$.
\end{lemma}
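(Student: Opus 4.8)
The plan is to reduce, by a blow-up at the origin, to the model situation in which $u$ is replaced by its second-order Taylor polynomial at $0$, and then to build an explicit competitor $\psi^1$ supported near the "angle'' that $w^1$ forms along the set where $u(x)$ and $u(x+te_n)$ cross. First I would exploit the hypotheses $u_n(0)=0$ and $\nabla u_n(0)\neq 0$: since $\partial_n u$ vanishes at $0$ with nonvanishing gradient, the zero set $\{u_n=0\}$ is, near $0$, a smooth hypersurface $\Gamma$ transverse to some direction, and on one side of $\Gamma$ we have $u_n>0$ while on the other $u_n<0$. Consequently, for $t>0$ small, the translate $u(\cdot+te_n)$ lies above $u$ on one side and below on the other, so $w^1=\max\{u,u(\cdot+te_n)\}$ is genuinely the maximum of two sheets meeting with a (small) angle of order $t\,|\nabla u_n(0)|$ along a hypersurface $\Gamma_t$ which is an $O(t)$-perturbation of $\Gamma$. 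This is precisely the configuration of Lemma \ref{lem1}, but now quantified in $t$: the "corner'' has opening proportional to $t$.

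Next I would perform the parabolic-type rescaling adapted to this corner. Because $u_n(0)=0$, the natural blow-up is $u_t(x):=t^{-2}\big(u(tx)-u(0)-\nabla u(0)\cdot(tx)\big)$ (subtracting the affine part, which does not affect minimality or the energy difference, exactly as in the reduction used in the proof of Lemma \ref{lem1}), so that $u_t$ converges locally uniformly in $C^2$ to the quadratic form $Q(x):=\tfrac12 x^{\mathsf T}D^2u(0)x$, and $t^{-2}u(tx+t e_n)-\dots$ converges to $Q(x)+ \nabla u_n(0)\cdot x$ (the linear term coming from $\partial_n$ of the quadratic part). Then $t^{-2}w^1(t\,\cdot)$, up to affine terms, converges to $\max\{Q,\,Q+\ell\}$ with $\ell(x)=\nabla u_n(0)\cdot x$ a nonzero linear function; likewise $F$ rescales (in its $z$ and $x'$ slots) to $F_0(p_1,p_2)=F(\,\cdot,\,\cdot,u(0),v(0),0)$, which is $C^2$ and strictly (uniformly) convex in $p_1$ by hypothesis. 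In this limit one has exactly the situation of \eqref{agg123}: the $\max$ of two distinct affine sheets is not a minimizer for $F_0$, and the explicit competitor built there — flattening the corner inside a large ball $B_R$ by replacing the two sheets with a convex combination $\alpha\,(\text{sheet}_1)+(1-\alpha)(\text{sheet}_2)-\beta_R(x')$ — produces an energy gain of order $\eta R^{n-1}$ on the strip $\{h_0>f_0\}$ while the correction $\beta_R$ on the annulus $B_{R+R_1}\setminus B_R$ costs only $O(\eta R^{n-2})$, exactly as in \eqref{24356ty}–\eqref{qw432}. Transplanting this competitor back through the rescaling, and recording the two powers of $t$ lost by $w^1(x)=t^2 w_t(x/t)+(\text{affine})$, I get a genuine Lipschitz perturbation $\psi^1$ with compact support in $B_\eta$ for $t$ small, and an energy gain of order $t^2$; the constant $c$ depends only on the ellipticity of $F_0$, on $|\nabla u_n(0)|$, and on $\eta$ (which fixes how large $R$ — hence $R_1$ — may be taken after rescaling). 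The admissibility of $\psi^1$ (that $w^1+t\psi^1$ really is an $\{e_n,e_{n+1}\}$-perturbation with the right Lipschitz bound) follows, as in Lemma \ref{lem1}, from the Implicit Function Theorem applied one-dimensionally in $x_n$ on the side where the flattened sheet beats $w^1$, using the transversality $\partial_n u_0\cdot e_n\neq 0$ inherited from $\nabla u_n(0)\neq 0$.

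The main obstacle, and the point requiring genuine care rather than bookkeeping, is keeping the two competing scales honest simultaneously: the corner opening is $O(t)$, the flattening region is a ball of radius $R$ in rescaled coordinates (i.e.\ radius $tR$ in original coordinates, which must stay inside $B_\eta$, forcing $R\le \eta/t$), and the energy gain $\sim \eta R^{n-1}$ must survive after multiplication by the Jacobian factors $t^n$ and the two-derivative rescaling, landing on a clean $-c\,t^2$ lower bound uniformly in small $t$. In other words, one must choose $R=R(t)$ large but compatible with $B_\eta$, verify that the quadratic-in-$t$ gain genuinely dominates the error terms $o(t^2)$ produced by the $C^2$-closeness of $F$ and $u$ to their blow-up limits, and check that $\alpha,\delta$ can be fixed independently of $t$ so that the Lipschitz norm of $\psi^1$ stays controlled. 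Once the scaling is organized so that the gain from strict convexity (the $-\eta$ in \eqref{qw432}) is the leading term, the conclusion $\mathcal E_\eta(w^1+t\psi^1,v)-\mathcal E_\eta(w^1,v)\le -c\,t^2$ follows; the $v$-slot plays no role since $v$ is untouched and $F$ is merely continuous in $(p_2,z_2)$ at the relevant point, so the cross terms contribute only lower-order perturbations absorbed into $o(t^2)$.
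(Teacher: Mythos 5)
Your conceptual picture is right --- the transversal corner in $w^1$, the role of strict convexity, and the Hopf-type non-degeneracy $\nabla u_n(0)\neq 0$ are indeed what drives the result --- but the execution via a parabolic blow-up is quite different from the paper's proof and has a genuine scaling problem that the proposal acknowledges but does not resolve.

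The paper's argument fixes a single Lipschitz function $\psi^1$ with compact support in $B_\eta$, \emph{independent of $t$}, and does the expansion in $t$ directly. Writing $u^1=\tfrac{u(\cdot+te_n)-u}{t}$, one has $w^1=u+tu^1_+$ and $w^1+t\psi^1=u+t(u^1_++\psi^1)$, so criticality of $(u,v)$ kills the first-order term and
$$\mathcal E_\eta(w^1+t\psi^1,v)-\mathcal E_\eta(w^1,v)=t^2\big(Q(u^1_++\psi^1)-Q(u^1_+)\big)+o(t^2),$$
where $Q$ is the second-variation quadratic form $Q(g)=\int(\nabla g)^TF_{p_1p_1}\nabla g+F_{z_1z_1}g^2+2gF_{p_1z_1}\!\cdot\!\nabla g$. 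Because $u^1_+\to (u_n)_+$ in $C^{0,1}$ and $\nabla u_n(0)\neq 0$, one then invokes Remark~4.3 of~\cite{SaVa} to assert that $(u_n)_+$ is not a minimizer of $Q$, which supplies the fixed $\psi^1$ and the constant $c$. No spatial rescaling is performed.

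Two concrete issues with your version. First, the claim that under the rescaling $u_t(x)=t^{-2}\big(u(tx)-u(0)-\nabla u(0)\cdot tx\big)$ ``$F$ rescales to $F_0(p_1,p_2)=F(\cdot,\cdot,u(0),v(0),0)$'' is not correct: a parabolic rescaling sends $\nabla u$ to $\nabla u(0)+t\,\nabla u_t$, so the relevant Lagrangian at leading order in $t$ is not $F_0$ but the Hessian quadratic form $\xi\mapsto\xi^TF_{p_1p_1}(\nabla u(0),\ldots)\xi$ --- that is, precisely the $Q$ that appears in the paper. The strict convexity you want to use is encoded in the ellipticity of $F_{p_1p_1}$, not in $F_0$ itself, and the normalization \eqref{wqerr4343} used in Lemma~\ref{lem1} has no direct analogue here because the two ``sheets'' of the limit $\max\{Q,Q+\ell\}$ are quadratic, not affine. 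Second, and more importantly, the competitor you produce this way is $\psi^1(x)=t\,\tilde\psi(x/t)$, supported in $B_{tR'}$; it shrinks with $t$, so the gain computed through $Q$ is of order $t^{\,n+2}$ (or $t^3$ after forcing $R\sim\eta/t$), not $t^2$, and in any case $\psi^1$ depends on $t$, contrary to the statement of the lemma which requires a single fixed $\psi^1$ making the estimate hold for all small $t$. You flag exactly this tension (``one must choose $R=R(t)$ large but compatible with $B_\eta$'') but do not close it; closing it is not bookkeeping --- one would have to reprove the Savin--Valdinoci non-minimality statement with $t$-uniform control, whereas the paper avoids the entire difficulty by never rescaling in space at all.
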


\begin{rem}\label{remlem}
We point out that we have an analogous result if we consider the function~$v$ instead of~$u$.
Precisely, if we assume that
$$ v_n(0)=0, \qquad \nabla v_n(0)\neq 0, $$
and we consider
$$ w^2(x):=\max\left\lbrace v(x),v(x+te_n)\right\rbrace, $$
then, for every~$\eta>0$ there exists a function~$\psi^2$ which is Lipschitz and has
compact support in~$B_{\eta}$ such that, for any small~$t$,
$$ \mathcal E_{\eta}(u,w^2 +t\psi^2)-\mathcal E_{\eta}(u,w^2) \leq -c\, t^2, $$
where~$c>0$ is a small constant depending on~$F,\eta$ and~$v$.
\end{rem}

\begin{proof}[Proof of Lemma \ref{lem2}]
We define
\begin{equation}\label{u1}
u^1(x):= \frac{u(x+te_n)-u(x)}{t}
\end{equation}
and we observe that
\begin{equation}\label{kuy6}
\|u^1-u_n\|_{C^{0,1}(B_{\eta})}=o(1) \quad \mbox{as } t\rightarrow 0. \end{equation}

We consider a Lipschitz function~$g_1$ and, using the fact that~$F\in C^2$ in
the variables~$p_1$ and~$z_1$, we compute
\begin{eqnarray*}
&&F(\nabla u+t\nabla g_1,\nabla v,u+tg_1,v,x') \\
&=& F(\nabla u,\nabla v,u,v,x') + t\left(F_{p_1}\nabla g_1 +F_{z_1}g_1\right) \\
&&\quad + t^2\left(\left(\nabla g_1\right)^{T}F_{p_1p_1} \nabla g_1 + F_{z_1z_1} g_1^2 + 2g_1 F_{p_1z_1}\cdot\nabla g_1\right) +o(t^2),
\end{eqnarray*}
where the derivatives of~$F$ are evaluated at~$(\nabla u,\nabla v,u,v,x')$
and the constant in the error term~$o(t^2)$ depends on~$u$, $F$ and~$\|g_1\|_{C^{0,1}(B_{\eta})}$.
Hence, we obtain
$$ \mathcal E_{\eta}(u+tg_1,v) =\mathcal E_{\eta}(u,v) +t\, L(g_1) +t^2 Q(g_1) +o(t^2), $$
with
$$ L(g_1):=\int_{B_{\eta}}\left(F_{p_1}\cdot\nabla g_1 + F_{z_1}g_1\right) dx $$
and
\begin{eqnarray*}
Q(g_1) &:=& \int_{B_{\eta}} G(\nabla g_1,g_1,x)dx \\
&=& \int_{B_{\eta}}\left(\left(\nabla g_1\right)^{T}F_{p_1p_1}\nabla g_1 + F_{z_1z_1}g^2_1
+2g_1 F_{p_1z_1}\cdot\nabla g_1\right)dx.
\end{eqnarray*}

Now, we take a Lipschitz function~$\psi^1$ with compact support in~$B_{\eta}$,
and we use the fact that~$(u,v)$ is a critical point for~$\mathcal E$ to obtain
\begin{equation}\label{Eeta}
\mathcal E_{\eta}(u+tu^1_+ +t\psi^1,v) -\mathcal E_{\eta}(u+tu^1_+,v) = t^2\left(Q(u^1_+ +\psi^1) -Q(u^1_+)\right) + o(t^2).
\end{equation}
Using~\eqref{w1} and~\eqref{u1}, we can write
\begin{equation}\label{111}
w^1=u+tu^1_+.
\end{equation}
Now we claim that, for~$\eta$ sufficiently small,
\begin{equation}\label{qwertp090}
Q(u^1_+)-Q((u_n)_+)=o(1) \quad \mbox{ and }\quad Q(u^1_+ +\psi^1)-Q((u_n)_+ +\psi^1)=o(1),
\quad {\mbox{ as }} t\to0.
\end{equation}
We focus on the first equality in \eqref{qwertp090}, since the second is similar.
To prove it, fixed $\mu>0$, we define
$$ \mathcal{B}_\mu^1:=B_\eta \cap \{|u_n|\le\mu\} \quad {\mbox{ and }} \quad
\mathcal{B}_\mu^2:=B_\eta \cap \{|u_n|>\mu\}.  $$
Notice that \eqref{kuy6} implies that
\begin{equation}\label{kuy6-2}
\lim_{t\to0} \|u^1_+ -(u_n)_+\|_{C^{0,1}(\mathcal{B}_\mu^2)} =
\lim_{t\to0} \|u^1-u_n\|_{C^{0,1}(\mathcal{B}_\mu^2)}=0.\end{equation}
As for the contribution coming from $\mathcal{B}_\mu^1$, we observe that,
since $\nabla u_n(0)\neq0$, the measure of $\mathcal{B}_\mu^1$ is at most of order of $\mu$.
This, together with \eqref{kuy6-2}, gives that
$$ \lim_{t\to0}| Q(u^1_+)-Q((u_n)_+)| \le C\mu, $$
for some $C>0$. Since $\mu$ can be taken arbitrarily small, this implies \eqref{qwertp090}.

Formula \eqref{qwertp090} means that, for~$\eta$ sufficiently small, we can write~$(u_n)_+$ instead of~$u^1_+$
in the right hand side of~\eqref{Eeta}.
Hence, recalling also~\eqref{111}, we have that
\begin{equation}\label{222}
\mathcal E_{\eta}(w^1+t\psi^1,v)-\mathcal E_{\eta}(w^1,v)= t^2\left(Q((u_n)_+ +\psi^1) -Q((u_n)_+)\right) + o(t^2).
\end{equation}
Now, we notice that~$u_n$, $0$ and~$G$ satisfy the hypotheses of Remark $4.3$ in~\cite{SaVa},
and therefore~$(u_n)_+$ is not a minimizer of~$Q$.
So we can take the function~$\psi^1$ in such a way that
$$ Q((u_n)_+ +\psi^1)\leq Q((u_n)_+)-c $$
for some small~$c$, which may depend on~$u$, $F$ and~$\eta$.
This together with~\eqref{222} gives that, for small~$t$,
$$ \mathcal E_{\eta}(w^1+t\psi^1,v)-\mathcal E_{\eta}(w^1,v)\leq -c t^2 $$
and this concludes the proof.
\end{proof}

Now we show that, under an additional regularity hypothesis on $F$,
the non-degeneracy condition $\nabla u_n\neq 0$ in Lemma \ref{lem2} is satisfied.
For this we use the Hopf Lemma, by adapting the proof of Lemma 4.6 in \cite{SaVa}
to the slightly more delicate case of a system of equations.

\begin{lemma}\label{beH}
Assume that
\begin{align}
&\text{either}~~  u,v\in C^{0,1}(\Omega)~~\text{are convex} \nonumber \\
&\text{or}~~  u,v\in C^{1,1}(\Omega). \nonumber
\end{align}
Furthermore, assume that $(u,v)$ is a critical point for
$\mathcal{E}$ and $F\in C^{3,\alpha}$ in a neighborhood of $(\nabla u(0),\nabla v(0), u(0),v(0),0)$.

Then, $u$ and $v$ are of class $C^{3,\alpha}$ in a neighborhood of $0$.

If, in addition, $u_n(0)=0$ (resp. $v_n(0)=0$) and $u_n$ (resp. $v_n$) does not vanish identically
in a neighborhood $V_0$ of $0$, then there exists a point $x_0\in V_0$ (resp. $x_1\in V_0$) such that
\begin{align}
u_n(x_0)=0,~~\nabla u_n(x_0)\ne0~~~~\text{and}~~~~v_n(x_1)=0,~~\nabla v_n(x_1)\ne0 \nonumber
\end{align}
\end{lemma}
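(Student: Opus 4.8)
The plan is to prove the two assertions of Lemma \ref{beH} in order: first the $C^{3,\alpha}$ regularity of $u$ and $v$ near the origin, and then the existence of a point where $u_n$ vanishes but $\nabla u_n$ does not (and similarly for $v$).

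\smallskip

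\textbf{Step 1: Regularity.} First I would note that, since $(u,v)$ is a critical point of $\mathcal E$, it solves the Euler--Lagrange system
\[
\mathrm{div}\, F_{p_1}(\nabla u,\nabla v,u,v,x') = F_{z_1}(\nabla u,\nabla v,u,v,x'), \qquad
\mathrm{div}\, F_{p_2}(\nabla u,\nabla v,u,v,x') = F_{z_2}(\nabla u,\nabla v,u,v,x')
\]
in the weak sense, in a neighborhood of $0$. In the case $u,v\in C^{1,1}$ this is a system with Lipschitz coefficients and, by the uniform convexity of $F$ in $p_1,p_2$ (assumption \eqref{F1}, which near a point with $\nabla u\cdot e_n\neq0$ gives ellipticity), classical elliptic regularity (Schauder estimates for the linearized system, bootstrapped) upgrades $(u,v)$ to $C^{2,\alpha}$ and then, using $F\in C^{3,\alpha}$, to $C^{3,\alpha}$. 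The subtlety is that uniform convexity is only assumed where $p_i\cdot e_n\neq0$; near a point where, say, $\nabla u(0)\cdot e_n=0$ one must argue differently. Here the convexity alternative enters: if $u,v\in C^{0,1}$ are convex, then one works with the Alexandrov/viscosity theory for convex solutions, or one exploits that a convex Lipschitz function is twice differentiable a.e. and that the equation, being uniformly elliptic on the (open, dense) set $\{\nabla u\cdot e_n\neq0\}$, controls $u$ there; a continuity/limiting argument across the degenerate set then gives $C^{3,\alpha}$ regularity locally. I would follow the scheme of Lemma 4.6 in \cite{SaVa}, treating the two components $u$ and $v$ essentially independently once the other is known to be smooth, iterating if necessary.

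\smallskip

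\textbf{Step 2: Non-degeneracy via Hopf.} Assume $u_n(0)=0$ and $u_n\not\equiv0$ near $0$. Differentiating the first Euler--Lagrange equation in the $x_n$-direction (legitimate now that $u,v\in C^{3,\alpha}$), the function $u_n$ solves a second-order linear elliptic equation $\mathcal L u_n = (\text{coupling terms in } v_n)$; if additionally $v_n\equiv 0$ the coupling vanishes and $u_n$ is a genuine solution of a homogeneous linear elliptic equation. Since $u_n\not\equiv0$, the set $\{u_n=0\}$ cannot contain an open ball, so there is a small ball $B\subset V_0$ on whose boundary $u_n$ is somewhere nonzero, and shrinking we find a ball $B'$ with $u_n$ of one sign in $B'$ and $u_n(x_0)=0$ for some $x_0\in\partial B'$. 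By the Hopf boundary point lemma applied to $\mathcal L$ on $B'$, the outer normal derivative of $u_n$ at $x_0$ is strictly nonzero, hence $\nabla u_n(x_0)\neq0$, which is exactly what is wanted. The same argument with the roles of $u$ and $v$ exchanged produces $x_1$ with $v_n(x_1)=0$, $\nabla v_n(x_1)\neq0$. If the coupling term does not vanish one instead applies the Hopf lemma componentwise along the lines of \cite{SaVa}, using that at the relevant point the other component's $e_n$-derivative has a sign, or restricts attention to a region where one component's monotonicity in $e_n$ is already controlled.

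\smallskip

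\textbf{Main obstacle.} The delicate point is Step 1 in the convex, merely-Lipschitz case: bridging the gap between the uniformly elliptic regime $\{\nabla u\cdot e_n\neq0\}$, where standard Schauder theory applies, and the possibly-degenerate locus, in order to conclude $C^{3,\alpha}$ regularity in a full neighborhood of $0$. I expect this to require the convexity hypothesis in an essential way (as in \cite{SaVa}), and to be where most of the work lies; the Hopf-lemma argument of Step 2 is then comparatively routine, the only care being the treatment of the coupling between the two equations.
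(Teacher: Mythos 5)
Your Step 1 is essentially the paper's argument: fix one unknown (say $v$), observe that the first Euler--Lagrange equation becomes a scalar fully nonlinear elliptic equation for $u$ whose coefficients involve $\nabla^2 v$ as bounded measurable data (via Rademacher/Alexandrov), and upgrade regularity; the paper specifically invokes Theorem~2.1 of Trudinger's paper \cite{Trud} together with Schauder to land in $C^{3,\alpha}$, and then does the same for $v$ with $u$ fixed. Your worry about where the uniform convexity fails is not explicitly resolved in the paper either, so that is not a defect peculiar to your account.

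Step 2, however, has a genuine gap. Differentiating $G^1=0$ in $e_n$ gives $L^1 w = f_1$ with $w=u_n$, where $f_1$ collects the $v_n$-terms and \emph{has no a priori sign and does not vanish in general}. Your argument that $\{u_n=0\}$ cannot contain an open ball, and the subsequent construction of the tangent ball, both presuppose the homogeneous case $f_1\equiv0$: with a nonzero right-hand side, $u_n$ may well vanish identically on an open set without contradiction, so unique continuation / the strong maximum principle are not available as stated. Your fallback (``apply Hopf componentwise along the lines of \cite{SaVa}, using that the other component's $e_n$-derivative has a sign, or restrict to a region where one component's monotonicity is controlled'') does not close the gap: at this stage nothing whatever is known about the sign of $v_n$, and moreover $f_1$ depends on second derivatives of $v_n$, so even a sign on $v_n$ would not control the sign of $f_1$. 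The paper's proof handles exactly this issue with an additional idea that your proposal is missing: it partitions $V_0$ into $P:=\{f_1>0\}$, $E:=\{f_1=0\}$, $N:=\{f_1<0\}$, observes that any set on which $w$ vanishes identically must lie in $E$ (since there $L^1w=0$ forces $f_1=0$), and then applies the Hopf lemma in a case-by-case fashion: if $x_0\in P$ (resp.\ $N$, up to replacing $w$ by $-w$) one applies Hopf inside the connected component of $P$ containing $x_0$; if $x_0\in E$ one joins a connected component of $E$ through $x_0$ to an adjacent component of $P$ (or $N$) to obtain a connected region where $w\not\equiv0$ and the sign of $L^1w$ is controlled. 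This sign-set decomposition and case analysis is precisely the new ingredient needed to pass from the single-equation Hopf argument of \cite{SaVa} to the coupled system, and it is absent from your proposal.
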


\begin{proof}
Since $(u,v)$ is a critical point for $\mathcal{E}$ it satisfies the elliptic system of equation
\[
\left\{
\begin{array}{c c l}
G^1(\nabla^2u,\nabla^2v,\nabla u,\nabla v, u,v,x'):=\mathrm{div}_x F_{p_1}(M)-F_{z_1}(M) & = & 0 \\
G^2(\nabla^2u,\nabla^2v,\nabla u,\nabla v, u,v,x'):=\mathrm{div}_x F_{p_2}(M)-F_{z_2}(M) & = & 0
\end{array}
\right.
\]
where $M:=(\nabla u,\nabla v,u,v,x')$.
Consider the first equation of the above system, with $v$ fixed.
If $v\in C^{0,1}(\Omega)$ is convex, then $\nabla v\in L^\infty(\Omega)^n$ and
$\nabla^2 v\in L^\infty(\Omega)^{n\times n}$ exist
almost everywhere (this follows from Rademacher Theorem and Alexandrov Theorem).
On the other hand, if $v\in C^{1,1}(\Omega)$, then $\nabla^2 v\in L^\infty(\Omega)^{n\times n}$
also exists almost everywhere (this follows from Rademacher Theorem).
Thus, at fixed $v$, the first equation (in the variable $u=u_v$)
is satisfied in the classical sense and the corresponding
solution $u_v$ belongs to $C^{3,\alpha}(\Omega)$
(this follows from Theorem 2.1 in \cite{Trud}, the Schauder estimates
and the fact that $F\in C^{3,\alpha}$).
In like manner, for any fixed $u$, the solution $v_u$
of the second\footnote{We may actually consider indifferently
the first or the second equation, as only one equation is needed.}
equation belongs to $C^{3,\alpha}(\Omega)$.
Therefore, $(u,v)\in C^{3,\alpha}(\Omega)\times C^{3,\alpha}(\Omega)$.
This shows the first claim of Lemma~\ref{beH}.
\medskip

Now we focus on the second claim of Lemma~\ref{beH}. To prove it, we observe
that
\begin{equation}\label{pqw987}
G^1=G^1(q_1,q_2,p_1,p_2,z_1,z_2,x')\in C^{1,\alpha}.\end{equation}
Hence, differentiating in the $e_n$-direction we see that $w:=u_n\in C^{2,\alpha}(\Omega)$ satisfies the linearized equation (in the viscosity sense)
\begin{equation}
G_{(q_1)_{ij}}^1w_{ij}+G_{(p_1)_i}^1w_i+G_{z_1}^1w
=-G_{(q_2)_{ij}}^1(v_n)_{ij}-G_{(p_2)_i}^1(v_n)_{i}-G_{z_2}^1v_n. \label{linEq}
\end{equation}
Here, the derivatives of $G^1$ are evaluated at
$(\nabla^2u,\nabla^2v,\nabla u,\nabla v, u,v,x')$.
For the convenience of the reader, we rewrite \eqref{linEq} as
$$ L^1w=f_1, $$
where
\begin{eqnarray*}
&& L^1 w := G_{(q_1)_{ij}}^1w_{ij}+G_{(p_1)_i}^1w_i+G_{z_1}^1w \\
{\mbox{and }} && f_1 :=-G_{(q_2)_{ij}}^1(v_n)_{ij}-G_{(p_2)_i}^1(v_n)_{i}-G_{z_2}^1v_n.
\end{eqnarray*}
Moreover, we set
\begin{eqnarray*}
P&:=&\{x\in V_0: f_1(x)> 0\}, \\
E&:=&\{x\in V_0:f_1(x)= 0\}\\
{\mbox{and }}\; N&:=&\{x\in V_0: f_1(x)< 0\}.
\end{eqnarray*}
In addition, denote by $(P_j)_{j\in J_1}$ (resp. $(E_j)_{j\in J_2}$)
the connected components of $P$ (resp. $E$).
This makes sense since $f_1\in C^{0,\alpha}(V_0)$
(recall \eqref{pqw987} and the fact that $v$ is $C^{3,\alpha}$ in $V_0$).

Note that if $P=N=\varnothing$ then $f_1\equiv 0$, and so
the conclusion follows as in Lemma 4.6 in \cite{SaVa}.
Thus we may suppose now that either $P$ or $N$ are nonempty.
Moreover, we can assume that $P$ is nonempty,
otherwise it is enough to replace $w$ by $-w$.

Also, we observe that
\begin{equation}\begin{split}\label{afster34}
&{\mbox{if there exists a compact set $\Sigma \Subset V_0$ such that $w_{|\Sigma}\equiv0$,}}\\
&{\mbox{then necessarily $\Sigma\subset E$.}}
\end{split}\end{equation}
Now we take\footnote{We use here the short notation $\{w=0\}=\{x\in V_0:w(x)=0\}$,
and similarly for $\{w<0\}$ and $\{w>0\}$.}
$x_0\in\{w=0\}\cap P$. Then there exists $j_1\in J_1$ such that $x_0\in P_{j_1}$.
It follows from \eqref{afster34} and the continuity of $f_1$
that $w$ cannot vanish identically in $P_{j_1}$.
Thus, we can apply the Hopf Lemma to $w$
at the point $x_0$, which admits a tangent ball
from either $\{w<0\}\cap P_{j_1}$ or $\{w>0\}\cap P_{j_2}$,
and the conclusion follows.

If $x_0\in\{w=0\}\cap N$,we replace $w$ with $-w$ and we reason as above.

If $x_0\in\{w=0\}\cap E$, we let $E_{j_2}$, for some $j_2\in J_2$,
be the connected component of $E$ such that $x_0\in E_{j_2}$.
By the continuity of $f_1$, there exists $j_1\in J_1$ such that
$K:=E_{j_2}\cup P_{j_1}$ is connected (up to exchanging $w$ with $-w$).
Again, \eqref{afster34} implies that $w$ cannot vanish identically in $K$, and so
we can apply the Hopf Lemma to $w$ at $x_0$,
which admits a tangent ball from either $\{w<0\}\cap K$ or $\{w>0\}\cap K$.

The same arguments apply to $v_n$. This completes the proof of Lemma \ref{beH}.
\end{proof}

\begin{rem}
The above result states that the non-degeneracy hypothesis $\nabla u_n\ne0$ of Lemma~\ref{lem2}
is always satisfied under a slightly stronger hypothesis on the potential $F$.
On the other hand, notice that we obtain a $C^{3,\alpha}$ regularity for $u$ and $v$ near 0, even though we asked only for, say, a $C^{1,1}$ regularity.
Thus, Lemma~\ref{lem2} is consistent.
\end{rem}

\section{Perturbations at infinity}\label{sec:infi}

In this section we modify the couples
$$ \left(\max\{u(x), u(x)+t e_n\}, v(x)\right) \quad {\mbox{ and }}
\quad \left(u(x), \max\{v(x), v(x)+t e_n\}\right) $$
at infinity in such a way that they become compact perturbations
of the couple $(u,v)$.

For this, for any~$R>1$ we define the function~$\varphi_R:\R\rightarrow\R$,
which is Lipschitz, even and with compact support, as
\begin{equation}\label{fiR}
\varphi_R(s):=\left\{
\begin{matrix}
1 & \mbox{if } 0\leq s\leq\sqrt{R}, \\
2\, \frac{\log R-\log s}{\log R}
& \mbox{if } \sqrt{R}<s<R, \\
0 & \mbox{if } s>R.
\end{matrix}
\right.
\end{equation}
We have that
\begin{equation}\label{fiRderiv}
\varphi'_R(s):=\left\{
\begin{matrix}
0 & \mbox{if } s\in(0,\sqrt{R})\cup(R,+\infty), \\
\frac{-2}{s\log R}
& \mbox{if } s\in(\sqrt{R},R). \\
\end{matrix}
\right.
\end{equation}
Also, for any~$t\in(0,\sqrt{R}/4]$, we consider the following bi-Lipschitz change of coordinates:
\begin{equation}\label{bilip}
x\mapsto y(x):= x+t\varphi_R(|x|)e_n.
\end{equation}
In these new coordinates, we define the functions~$u^+_{R,t}$ and~$v^+_{R,t}$ as
$$ u^+_{R,t}(y):=u(x) \quad {\mbox{ and }} \quad v^+_{R,t}(y)=v(x). $$
We observe that~$u^+_{R,t}(x)$ and~$v^+_{R,t}$ coincide with~$u(x-te_n)$
and~$v(x-te_n)$ respectively in~$B_{\sqrt{R}/2}$
and with~$u(x)$ and~$v(x)$ respectively outside~$B_R$.

We also define~$u^-_{R,t}$ and~$v^-_{R,t}$ by replacing~$t$ with~$-t$ in \eqref{bilip}.

Now, we want to obtain an estimate of~$\mathcal E_R(u^+_{R,t},v^+_{R,t})$
and~$\mathcal E_R(u^-_{R,t},v^-_{R,t})$
in terms of~$\mathcal E_R(u,v)$.
For this, we notice that
$$ D_x y=I+A, $$
where
\begin{eqnarray*}
A(x):= t \, \varphi'_R(|x|)
 \left( \begin{array}{cccc}
0      & 0      & \ldots  & 0  \\
0      & 0      & \ldots  & 0  \\
\vdots & \vdots & \ddots   & \vdots \\
\frac{x_1}{|x|} & \frac{x_2}{|x|} &  \ldots  & \frac{x_n}{|x|}
\end{array} \right)
\end{eqnarray*}
and~$\|A\|\leq t\left|\varphi'_R(|x|)\right|\ll 1$.
Moreover,
$$ D_y x= \left(I+A\right)^{-1}= I-\frac{1}{1+trA}A. $$
Furthermore, the following relations hold:
$$ \nabla_y u^+_{R,t}=\nabla_x u\, D_y x, \qquad \nabla_y v^+_{R,t}=\nabla_x v\, D_y x \quad
{\mbox{ and }} \quad dy=\left(1+trA\right)dx. $$
Hence, we have
\begin{eqnarray*}
&& \int_{\Omega\cap B_R} F\left(\nabla_y u^+_{R,t},\nabla_y v^+_{R,t},u^+_{R,t},v^+_{R,t},y'\right) dy \\
&=& \int_{\Omega\cap B_R} F\left(\nabla_x u\left(I-\frac{1}{1+trA}A\right), \nabla_x v\left(I-\frac{1}{1+trA}A\right),u,v,x'\right)\left(1+trA\right)dx.
\end{eqnarray*}
Now we use the hypothesis~\eqref{F2} for~$F$ to bound the right hand side from above:
more precisely, since~$\left|(p_1 A)\right|\leq|p_1\cdot e_n|/4$
and~$\left|(p_2 A)\right|\leq|p_2\cdot e_n|/4$, we have that
\begin{eqnarray*}
&&F\left(p_1\left(I-\frac{1}{1+trA}A\right),p_2\left(I-\frac{1}{1+trA}A\right),z_1,z_2,x'\right) \left(1+trA\right) \\
&\leq& F(p_1,p_2,z_1,z_2,x')\left(1+trA\right) \\
&&\quad -F_{p_1}(p_1,p_2,z_1,z_2,x')\cdot(p_1 A) - F_{p_2}(p_1,p_2,z_1,z_2,x')\cdot(p_2 A) \\
&&\quad + C\Big[\left|F_{p_1p_1}(p_1,p_2,z_1,z_2,x')\right||p_1 A|^2 + \left|F_{p_2p_2}(p_1,p_2,z_1,z_2,x')\right||p_2 A|^2 \\
&&\qquad\qquad + 2\left|F_{p_1p_2}(p_1,p_2,z_1,z_2,x')\right||p_1 A||p_2 A|\Big].
\end{eqnarray*}

If we consider~$u^-_{R,t}$ and~$v^-_{R,t}$,
we can write the same inequality with~$A$ replaced by~$-A$.
Therefore, we obtain
\begin{equation}\begin{split}\label{333}
&\mathcal E_R(u^+_{R,t},v^+_{R,t})+\mathcal E_R(u^-_{R,t},v^-_{R,t})-2\mathcal E_R(u,v) \\
\leq&\, C\int_{\Omega\cap B_R}\Big[\left|F_{p_1p_1}\right||\nabla u|^2|A|^2 + \left|F_{p_2p_2}\right||\nabla v|^2|A|^2 + 2\left|F_{p_1p_2}\right||\nabla u||\nabla v||A|^2\Big] dx  \\
\leq&\, C\frac{t^2}{(\log R)^2}\int_{\Omega\cap\left(B_R\setminus B_{\sqrt R}\right)} \frac{|F_{p_1p_1}||\nabla u|^2 +|F_{p_2p_2}||\nabla v|^2 + |F_{p_1p_2}||\nabla u||\nabla v|}{|x|^2}\, dx,
\end{split}\end{equation}
where the derivatives of~$F$ are evaluated at~$(\nabla u,\nabla v,u,v,x')$.
Now we set
$$ h(r):=\int_{\Omega\cap B_r}\left(|F_{p_1p_1}||\nabla u|^2 +|F_{p_2p_2}||\nabla v|^2 + |F_{p_1p_2}||\nabla u||\nabla v|\right) dx. $$
Recalling~\eqref{growth}, we have that~$h(r)\leq Cr^2$. Also, we see that
\begin{equation}\label{polarcord}
\int_{\sqrt R}^R \frac{h'(r)}{r^2}\, dr\leq
\frac{h(R)}{R^2} + 2\int_{\sqrt R}^R \frac{h(r)}{r^3}\, dr\leq C\log R.
\end{equation}
Passing to polar coordinates in the last integral in~\eqref{333} and using \eqref{polarcord} we get
$$
\limsup_{R\rightarrow +\infty}\sup_{t\in(0,\sqrt{R}/4)} \frac{\mathcal E_R(u^+_{R,t},v^+_{R,t})+\mathcal E_R(u^-_{R,t},v^-_{R,t})-2\mathcal E_R(u,v)}{t^2}\leq 0.
$$

\begin{rem}
We point out that in a similar way, by perturbing only one of the element
of the couple $(u,v)$, one can obtain
\begin{equation}\label{60}
\limsup_{R\rightarrow +\infty}\sup_{t\in(0,\sqrt{R}/4)} \frac{\mathcal E_R(u^+_{R,t},v)+\mathcal E_R(u^-_{R,t},v)-2\mathcal E_R(u,v)}{t^2}\leq 0
\end{equation}
and
\begin{equation}\label{60bis}
\limsup_{R\rightarrow +\infty}\sup_{t\in(0,\sqrt{R}/4)} \frac{\mathcal E_R(u,v^+_{R,t})+\mathcal E_R(u,v^-_{R,t})-2\mathcal E_R(u,v)}{t^2}\leq 0.
\end{equation}
\end{rem}

We conclude this section recalling the following integral formulas:
\begin{equation}\begin{split}\label{64}
& \mathcal E_R\left(\max\left\lbrace u^-_{R,t},u\right\rbrace,v\right) + \mathcal E_R\left(\min\left\lbrace u^-_{R,t},u\right\rbrace,v\right) = \mathcal E_R(u^-_{R,t},v) + \mathcal E_R(u,v), \\
& \mathcal E_R\left(u,\max\left\lbrace v^-_{R,t},v\right\rbrace\right) + \mathcal E_R\left(u, \min\left\lbrace v^-_{R,t},v\right\rbrace\right) = \mathcal E_R(u,v^-_{R,t}) + \mathcal E_R(u,v).
\end{split}\end{equation}

\section{Proof of Theorem~\ref{T1}}\label{sec:proof}

We recall the notation introduced at the beginning of Section \ref{sec:infi},
and we observe that, since~$(u,v)$ is an~$e_n$-minimizer,
$$ \mathcal E_R(u,v)\leq \mathcal E_R(u^+_{R,t},v). $$
From this and~\eqref{60} we obtain that
\begin{equation}\label{61}
\lim_{R\rightarrow +\infty}\mathcal E_R(u^-_{R,t},v)-\mathcal E_R(u,v)=0,
\end{equation}
at $t$ fixed.

Using again the minimality of~$(u,v)$, we have that
$$ \mathcal E_R(u,v)\leq\mathcal E_R\left(\min\left\lbrace u^-_{R,t},u\right\rbrace,v\right), $$
which, together with the first relation in \eqref{64}, implies that
\begin{equation}\label{62}
\mathcal E_R\left(\max\left\lbrace u^-_{R,t},u\right\rbrace,v\right)- \mathcal E_R(u^-_{R,t},v) = \mathcal E_R(u,v) -\mathcal E_R\left(\min\left\lbrace u^-_{R,t},u\right\rbrace,v\right)\leq 0.
\end{equation}
Putting together~\eqref{61} and~\eqref{62} we obtain
\begin{equation}\label{63}
\lim_{R\rightarrow +\infty} \mathcal E_R\left(\max\left\lbrace u^-_{R,t},u\right\rbrace,v\right) -\mathcal E_R(u,v) =0.
\end{equation}

We set
\begin{equation}\label{66}
f_{R,t}:=\max\left\lbrace u^-_{R,t},u\right\rbrace
\end{equation}
and we observe that
$$ f_{R,t}=\max\left\lbrace u(x),u(x+te_n)\right\rbrace $$
and~$f_{R,t}\in D^t_R(u)$.

Now, we argue by contradiction, assuming that~$u\in C^1(\Omega)$
is not monotone on a line in the direction~$e_n$.
This implies that we can take~$t>0$ in such a way that~$u(x)$ and~$u(x+te_n)$
satisfy the hypotheses of Lemma~\ref{lem1}, say at some point~$x_0\in\Omega$
(see Remark~$4.2$ in~\cite{SaVa}).
Therefore, we have that~$g_{R,t}:=(f_{R,t},v)$ is not an~$e_n$-minimizer for~$\mathcal E$.
Hence, in a neighborhood of~$x_0$, we can perturb~$g_{R,t}$
into~$\tilde{g}_{R,t}$ in such a way that
$$ \mathcal E_R(\tilde{g}_{R,t})\leq \mathcal E_R(g_{R,t})-c $$
for some~$c>0$ which depends only on~$(u,v)$.
From the last inequality and~\eqref{63} we reach a contradiction
with the minimality of~$(u,v)$ as~$R\rightarrow +\infty$.

If we assume that~$v\in C^1(\Omega)$ is not monotone on a line in the~$e_n$-direction,
we get again a contradiction with the fact that~$(u,v)$ is an~$e_n$-minimizer.
Indeed we can repeat the same argument as above, using~\eqref{60bis} and
the second integral formula in~\eqref{64}.
This concludes the proof of Theorem~\ref{T1}.

\section{Proof of Theorem~\ref{T2}}\label{sec:proof2}

We recall the notation introduced at the beginning of Section \ref{sec:infi}.
From~\eqref{60} we deduce that, for~$\epsilon>0$, we can take~$R$ large such that
\begin{equation}\label{67}
\mathcal E_R(u^+_{R,t},v) +\mathcal E_R(u^-_{R,t},v) -2\mathcal E_R(u,v)\leq \epsilon\, t^2.
\end{equation}
Moreover, we know that~$(u,v)$ is~$\left\lbrace e_n,e_{n+1}\right\rbrace$-stable,
and so
\begin{equation}\label{68}
\mathcal E_R(w_1,v)-\mathcal E_R(u,v)\geq -\epsilon\, t^2 \qquad \mbox{for any } w_1\in\mathcal D^t_R(u),
\end{equation}
for every~$t$ sufficiently small (see Definition~\ref{stab}).

Now, recalling the definition of~$f_{R,t}$ in~\eqref{66},
we use the first integral formula in~\eqref{64} to obtain that
\begin{eqnarray*}
&& \mathcal E_R(f_{R,t},v) -\mathcal E_R(u,v) \\
&=& \mathcal E_R(u^-_{R,t},v)- \mathcal E_R\left(\min\left\lbrace u^-_{R,t},u\right\rbrace,v\right) \\
&=& \mathcal E_R(u^-_{R,t},v) + \mathcal E_R(u^+_{R,t},v) -2\mathcal E_R(u,v)
+ \mathcal E_R(u,v) \\&&\qquad - \mathcal E_R\left(\min\left\lbrace u^-_{R,t},u\right\rbrace,v\right)
+ \mathcal E_R(u,v) -\mathcal E_R(u^+_{R,t},v) \\
&\leq& 3\epsilon\, t^2,
\end{eqnarray*}
where we have used~\eqref{67} and~\eqref{68}.

By contradiction, we assume that~$u_n$ changes sign in~$\Omega$.
Then, by Lemma \ref{beH}, there exists a point~$x_0\in\Omega$ such that
the hypotheses of Lemma~\ref{lem2}
are satisfied in a neighborhood of~$x_0$.
Hence, we have that we can perturb~$f_{R,t}$
into~$\tilde{f}_{R,t}$ near~$x_0$ in such a way that
$$ \mathcal E_R(\tilde{f}_{R,t},v)\leq\mathcal E_R(f_{R,t},v)-c\, t^2, $$
where~$\tilde{f}_{R,t}\in\mathcal D^{Ct}_R(u)$, for some~$c$, $C>0$
which depend only on~$u$.
Then, we obtain
$$ \mathcal E_R(\tilde{f}_{R,t},v)\leq \mathcal E_R(u,v) +(3\epsilon-c)t^2, $$
which gives a contradiction with the stability inequality~\eqref{stabineq}
if we take~$\epsilon\ll c$.

If we assume that~$v_n$ changes sign in~$\Omega$,
we reason in a similar way, using~\eqref{60bis},
the second integral formula in~\eqref{64} and Remark~\ref{remlem}
to reach the same contradiction.
Therefore, either~$u_n\geq0$ or~$u_n\leq0$ and either~$v_n\geq0$ or~$v_n\leq0$ in~$\Omega$.
This completes the proof of Theorem~\ref{T2}.

\section{Some applications}\label{applications}

\subsection{Two-stated mixture of Bose-Einstein condensate}
Here we consider the following system
\begin{eqnarray}
\left\{
\begin{array}{r c l}
\Delta u=uv^2,\\
\Delta v= vu^2,\\
u,v>0.
\end{array}
\right. \label{boseeinstein}
\end{eqnarray}
As already mentioned in the Introduction, the above system arises in the analysis
of phase separation phenomena in binary mixtures of Bose-Einstein condensates with multiple states.

The energy associated to \eqref{boseeinstein} is the following:
$$ \mathcal{E}(u,v):= \frac12 \int_{\R^n} |\nabla u|^2 +
|\nabla v|^2 + u^2v^2.$$
Under a suitable growth condition, we show here that stable solutions of \eqref{boseeinstein}
are one-dimensional,
i.e. there exist $\overline u$, $\overline v:\R\to\R$ and $\omega_u$, $\omega_v\in\S^{n-1}$ such that
$(u(x),v(x))=\big(\overline u(\omega_u\cdot x), \overline v(\omega_v\cdot x)\big)$.
Precisely:

\begin{prop} \label{prop:bose}
Let $u,v\in C^2(\R^n)$.
Suppose that $(u,v)$ is a stable solution to \eqref{boseeinstein}, and that the following
growth condition holds true: there exists a constant $C>0$ such that, for $R$ sufficiently large,
\begin{equation}\label{gr2}
\int_{B_R} |\nabla u|^2+|\nabla v|^2 \le C R^2.
\end{equation}

Then $(u,v)$ possesses one-dimensional symmetry.
\end{prop}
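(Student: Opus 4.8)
The plan is to deduce Proposition~\ref{prop:bose} directly from Theorem~\ref{T2}, applied to the energy $\mathcal E$ associated with
$$ F(p_1,p_2,z_1,z_2,x'):=\tfrac{1}{2}\bigl(|p_1|^2+|p_2|^2+z_1^2z_2^2\bigr), $$
so the main task is to check that all the hypotheses of Theorem~\ref{T2} hold. First I would verify the structural conditions on $F$: it is a polynomial, hence $C^{3,\alpha}$ (indeed $C^\infty$) on $\R^{2n}\times\R^2\times V$; it is uniformly convex in $p_1$ and $p_2$, since $F_{p_1p_1}=F_{p_2p_2}=\mathrm{Id}$; and~\eqref{F2} holds trivially with $C=1$, because $F_{p_1p_1}$ and $F_{p_2p_2}$ are constant matrices and $F_{p_1p_2}\equiv0$, so the two sides of each inequality in~\eqref{F2} coincide. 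Moreover $\Omega=\R^n=\R^{n-1}\times\R$ and $F$ does not depend on $x$, as required by the general setting.

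Next I would dispatch the growth condition and the stability notion. For this $F$ the quantities $|F_{p_1p_1}|$ and $|F_{p_2p_2}|$ equal a fixed constant and $F_{p_1p_2}\equiv0$, so the left-hand side of~\eqref{growth} is a constant multiple of $\int_{B_R}(|\nabla u|^2+|\nabla v|^2)\,dx$, and~\eqref{growth} reduces exactly to~\eqref{gr2}. The Euler--Lagrange system of $\mathcal E$ is precisely the pair of equations in~\eqref{boseeinstein}, so a solution of~\eqref{boseeinstein} is a critical point of $\mathcal E$; and since $\Omega=\R^n$, $F\in C^\infty$ and $u,v\in C^2(\R^n)$, Lemma~\ref{sens} applies and shows that a classical stable solution is an $\{e_n,e_{n+1}\}$-stable solution in the sense of Definition~\ref{stab}. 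Finally, the right-hand sides $uv^2$ and $u^2v$ are as regular as $u$ and $v$, so elliptic bootstrap upgrades $u,v\in C^2(\R^n)$ to $u,v\in C^\infty(\R^n)$; in particular $u$ and $v$ are locally $C^{1,1}$, which is the regularity actually used near individual points in the proof of Theorem~\ref{T2} (through Lemmas~\ref{lem1},~\ref{lem2} and~\ref{beH}).

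With the hypotheses in place, Theorem~\ref{T2} gives that $u$ and $v$ are monotone in the $e_n$-direction. To upgrade this to one-dimensional symmetry I would use that $\mathcal E$ is invariant under the orthogonal group $O(n)$: for any $\omega\in\S^{n-1}$, an orthogonal change of variables mapping $e_n$ to $\omega$ leaves $\mathcal E$, the balls $B_R$, the bound~\eqref{gr2}, the stability property and the regularity all unchanged, so Theorem~\ref{T2} applies in the rotated frame and yields that $u$ and $v$ are monotone along every line in the $\omega$-direction. As $\omega$ is arbitrary, $u$ and $v$ are monotone on every line of $\R^n$, and Remark~\ref{import} then produces $\overline u,\overline v:\R\to\R$ and $\omega_u,\omega_v\in\S^{n-1}$ with $(u(x),v(x))=\bigl(\overline u(\omega_u\cdot x),\overline v(\omega_v\cdot x)\bigr)$, the two directions being allowed to differ in general.

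The main obstacle is not substantial: it is the bookkeeping needed to reconcile the regularity hypotheses of Theorem~\ref{T2} --- stated there for $u,v$ convex or globally $C^{1,1}(\Omega)$ --- with the $C^2(\R^n)$ assumption available here, which I would handle by noting that the perturbation arguments entering that proof are purely local and that $C^2$ solutions are smooth by bootstrap. The only genuinely additional step beyond a verbatim application of Theorem~\ref{T2} is the rotation used to pass from monotonicity in one direction to monotonicity on all lines, and this relies essentially on the rotational symmetry of the Bose--Einstein energy, which the abstract functional of Theorem~\ref{T2} need not possess.
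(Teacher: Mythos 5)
Your proposal is correct and follows the same route as the paper: take the same $F$, observe that \eqref{F2} holds trivially because $F_{p_1p_1}$ and $F_{p_2p_2}$ are constant and $F_{p_1p_2}\equiv0$, note that \eqref{growth} collapses to \eqref{gr2}, invoke Lemma~\ref{sens} to pass from classical stability to $\{e_n,e_{n+1}\}$-stability, and then apply Theorem~\ref{T2}.

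The one genuinely useful addition you make explicit is the rotation step. Theorem~\ref{T2} only yields monotonicity of $u_n$ and $v_n$ (i.e.\ monotonicity along the fixed $e_n$-direction), whereas Remark~\ref{import} needs monotonicity along \emph{every} line of $\R^n$; the two are not the same, and the implication fails in general for an energy that is merely $e_n$-translation-invariant. The paper's proof simply writes ``we can apply Theorem~\ref{T2} \dots\ and obtain that $(u,v)$ is one-dimensional,'' silently relying on the $O(n)$-invariance of the Bose--Einstein energy, \eqref{gr2} and the stability notion to rerun the argument in every rotated frame. You spell that out, correctly observing that this extra symmetry of the concrete functional is what allows the promotion from monotonicity in one direction to one-dimensionality, so your version closes a small gap the paper leaves to the reader.
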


\begin{proof}
We define
$$ F(p_1,p_2,z_1,z_2,x'):=\frac12\left( |p_1|^2+|p_2|^2+z_1^2 z_2^2\right).$$
Notice that $F$ is convex in $p_1$ and $p_2$ and satisfies \eqref{F2}.
Moreover $F_{p_1p_1}=1=F_{p_2p_2}$ and $F_{p_1p_2}=0$, therefore the growth condition \eqref{growth} is ensured by \eqref{gr2}.
Then, we can apply Theorem \ref{T2} (recall also Lemma \ref{sens})
and we obtain that $(u,v)$ is one-dimensional.
\end{proof}

As a corollary, we obtain the one-dimensional symmetry in dimension $2$ for stable solutions to \eqref{boseeinstein}
which have linear growth at infinity (see \cite{BSWW}).

\begin{corollary}
Let $n=2$ and let $u,v\in C^2(\R^2)$.
Suppose that $(u,v)$ is a stable solution to \eqref{boseeinstein},
and that the following growth condition holds true:
there exists a constant $C>0$ such that
\begin{equation}\label{gr3}
|u(x)|+|v(x)|\le C(1+|x|).
\end{equation}

Then $(u,v)$ possesses one-dimensional symmetry.
\end{corollary}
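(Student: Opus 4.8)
The plan is to reduce the corollary to Proposition~\ref{prop:bose}, so the only work is to verify that the linear growth hypothesis \eqref{gr3} on $u$ and $v$ implies the energy growth condition \eqref{gr2} in dimension $n=2$. Thus the whole proof is an elliptic estimate: from an $L^\infty$ bound on $u$ on $B_{2R}$ we want an $L^2$ bound on $\nabla u$ on $B_R$, with the right power of $R$.

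First I would fix $R$ large and a cutoff $\zeta\in C_0^\infty(B_{2R})$ with $\zeta\equiv1$ on $B_R$, $0\le\zeta\le1$ and $|\nabla\zeta|\le C/R$. Testing the equation $\Delta u=uv^2$ against $\zeta^2 u$ and integrating by parts gives
$$\int_{B_{2R}}\zeta^2|\nabla u|^2 = -\int_{B_{2R}}2\zeta u\,\nabla\zeta\cdot\nabla u-\int_{B_{2R}}\zeta^2 u^2 v^2.$$
Since $u,v>0$, the last term has a sign and can be dropped; absorbing the first term with Young's inequality $2\zeta u\,\nabla\zeta\cdot\nabla u\le\frac12\zeta^2|\nabla u|^2+2u^2|\nabla\zeta|^2$ yields
$$\int_{B_R}|\nabla u|^2\le\int_{B_{2R}}\zeta^2|\nabla u|^2\le C\int_{B_{2R}}u^2|\nabla\zeta|^2\le\frac{C}{R^2}\int_{B_{2R}}u^2.$$
By \eqref{gr3} we have $u(x)^2\le C(1+|x|)^2\le CR^2$ on $B_{2R}$, and since $|B_{2R}|=CR^2$ when $n=2$, the right-hand side is bounded by $CR^{-2}\cdot R^2\cdot R^2=CR^2$. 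The same computation with $v$ in place of $u$ (using $\Delta v=vu^2$) gives $\int_{B_R}|\nabla v|^2\le CR^2$. Adding the two estimates produces exactly \eqref{gr2}.

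Having established \eqref{gr2}, I would simply invoke Proposition~\ref{prop:bose}: $(u,v)$ is a stable solution to \eqref{boseeinstein} in $C^2(\R^2)$ satisfying \eqref{gr2}, hence it possesses one-dimensional symmetry, which is the desired conclusion. The only mildly delicate point is making sure the sign of $u^2v^2$ is exploited correctly so that the nonlinear term can be discarded rather than estimated — this is where positivity of $u$ and $v$ (part of the hypotheses of \eqref{boseeinstein}) is essential; everything else is the standard Caccioppoli inequality and bookkeeping of the powers of $R$, which works precisely because $n=2$ matches the quadratic growth on the right of \eqref{gr2}.
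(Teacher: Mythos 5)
Your argument is correct, and it takes a genuinely different route from the paper. The paper simply cites Theorem 5.1 of \cite{Wa}, which asserts that for solutions of \eqref{boseeinstein} in $\R^2$ the linear growth \eqref{gr3} forces $|\nabla u|$ and $|\nabla v|$ to be globally bounded, after which $\int_{B_R}|\nabla u|^2+|\nabla v|^2\le C|B_R|\le CR^2$ is immediate. You instead derive the energy bound \eqref{gr2} from scratch via a Caccioppoli inequality: test $\Delta u=uv^2$ against $\zeta^2 u$, use the fact that the nonlinearity $u^2v^2\ge 0$ has a favorable sign (this is where the constraint $u,v>0$ in \eqref{boseeinstein} enters) to drop the zero-order term, and close the estimate with Young's inequality and the linear bound on $u$; the powers of $R$ work out because $n=2$. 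The computation $\int_{B_R}|\nabla u|^2\le CR^{-2}\int_{B_{2R}}u^2\le CR^{-2}\cdot R^2\cdot R^2=CR^2$ is exact. Your proof is more elementary and self-contained, since it replaces a nontrivial external gradient estimate by a one-line energy inequality available directly from the equation; the paper's version is shorter on the page but imports a substantially stronger statement (pointwise $L^\infty$ control of the gradient) than what is actually needed, which is only an integrated quadratic bound. Both are valid; yours exposes more clearly why the quadratic growth threshold in \eqref{gr2} matches the linear growth in \eqref{gr3} precisely in dimension two.
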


\begin{proof}
We observe that, by Theorem 5.1 in \cite{Wa}, if $(u,v)$ satisfies \eqref{gr3}
then $|\nabla u|$ and $|\nabla v|$ are bounded in $\R^2$. Therefore,
condition \eqref{gr2} is trivially satisfied,
and so we get the desired result thanks to Proposition \ref{prop:bose}.
\end{proof}

\subsection{General systems with $p$-Laplacian type operators}
Theorem \ref{T2} actually applies to a broader class
of operators and nonlinearities.
Indeed, with the notation introduced in~\cite{Di} (see in particular pages 3474-3475 there),
one can consider
\begin{eqnarray}
\left\{
\begin{array}{r c l}
\mathrm{div} \big(a(|\nabla u|)\nabla u\big)=\tilde F_1(u,v),\\
\mathrm{div} \big(b(|\nabla v|)\nabla v\big)=\tilde F_2(u,v),
\end{array}
\right. \label{ppp1}
\end{eqnarray}
where $\tilde F$ is a $C^{1,1}_{loc}$ function on~$\R^2$
(it corresponds to the function~$F$ introduced in~\cite{Di},
here denoted as~$\tilde F$ to avoid confusion), and $\tilde{F}_1$, $\tilde{F}_2$
denote the derivatives of~$\tilde F$ with respect to the first and the second variable respectively.

Then, we have the following:

\begin{prop} \label{prop:plapl}
Let $u\in C^1(\R^n)\cap C^2(\{\nabla u\neq 0\})$ and $v\in C^1(\R^2)\cap C^2(\{\nabla v\neq 0\})$.
Suppose that $(u,v)$ is a stable solution to~\eqref{ppp1}, and that
conditions~$(B1)$ and~$(B2)$ in~\cite{FSV} are satisfied for $a$, $b$, $A$ and $B$.

Also, assume that the following growth condition holds true:
there exists a constant $C>0$ such that, for $R$ sufficiently large,
\begin{equation}\label{ppp2}
\int_{B_R}\Lambda_2(|\nabla u|) +\Gamma_2(|\nabla v|)\le CR^2.
\end{equation}
where $\Lambda_2$ and $\Gamma_2$ are as in \cite{Di}.
Then $(u,v)$ possesses one-dimensional symmetry.
\end{prop}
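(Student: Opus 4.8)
The plan is to recognize \eqref{ppp1} as the Euler--Lagrange system of an energy functional of the form $\mathcal E$ treated in Theorem~\ref{T2}, and then simply verify the structural hypotheses \eqref{F1}, \eqref{F2}, the regularity requirement, and the growth condition \eqref{growth}. Concretely, following the notation of~\cite{Di}, one sets
$$ F(p_1,p_2,z_1,z_2,x'):= A(|p_1|)+B(|p_2|)+\tilde F(z_1,z_2), $$
where $A'(t)=a(t)t$ and $B'(t)=b(t)t$ (so that $A$, $B$ are the potentials associated with the operators $\mathrm{div}(a(|\nabla u|)\nabla u)$ and $\mathrm{div}(b(|\nabla v|)\nabla v)$), and observes that a critical point of $\mathcal E(u,v)=\int_{\R^n}F(\nabla u,\nabla v,u,v)\,dx$ solves precisely \eqref{ppp1}. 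Since a stable solution of \eqref{ppp1} is in particular an $\{e_n,e_{n+1}\}$-stable solution of $\mathcal E$ (by the same reasoning as in Lemma~\ref{sens}, or directly from Remark~\ref{senss}), it suffices to check that Theorem~\ref{T2} applies.

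First I would verify the convexity and regularity of $F$ in the gradient variables. Since $A$ and $B$ are defined through the conditions $(B1)$--$(B2)$ of~\cite{FSV} on $a$, $b$, these guarantee that $t\mapsto A(|t|)$ and $t\mapsto B(|t|)$ are convex, of class $C^2$ away from the origin, and uniformly convex at every $p_i$ with $p_i\cdot e_n\neq 0$ (such a point has $p_i\neq 0$), which is exactly what \eqref{F1} demands; the $C^2$ smoothness of $\tilde F$ comes from $\tilde F\in C^{1,1}_{loc}(\R^2)$. Next I would check the doubling-type condition \eqref{F2}: since $F_{p_1p_1}$ depends only on $p_1$ (and likewise $F_{p_2p_2}$ only on $p_2$) and $F_{p_1p_2}\equiv 0$, the inequalities \eqref{F2} reduce to the statement that $|F_{p_1p_1}(p_1+q_1)|\le C|F_{p_1p_1}(p_1)|$ for $|q_1|\le |p_1\cdot e_n|/4$, which is the kind of controlled-growth estimate on $a$, $A$ guaranteed by $(B1)$--$(B2)$; this is precisely why those conditions were imposed in~\cite{Di, FSV}. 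Then I would note that the regularity hypothesis of Theorem~\ref{T2} --- either $u,v$ convex and Lipschitz, or $u,v\in C^{1,1}$ --- is compatible with the regularity assumed here, $u\in C^1(\R^n)\cap C^2(\{\nabla u\neq 0\})$ (and similarly for $v$), by invoking the interior regularity theory for such quasilinear operators (as in~\cite{Di, FSV}); alternatively, one works directly with the weak stability formulation. Finally, the growth condition \eqref{growth} becomes, since $F_{p_1p_2}\equiv 0$,
$$ \int_{B_R}|F_{p_1p_1}(|\nabla u|)||\nabla u|^2 + |F_{p_2p_2}(|\nabla v|)||\nabla v|^2\,dx\le CR^2, $$
and with the notation of~\cite{Di} the integrand is comparable to $\Lambda_2(|\nabla u|)+\Gamma_2(|\nabla v|)$, so \eqref{ppp2} gives exactly \eqref{growth}.

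Having checked all the hypotheses, Theorem~\ref{T2} yields that $u_n$ and $v_n$ each have a sign in $\R^n$, i.e. $u$ and $v$ are monotone in the $e_n$-direction; the direction $e_n$ plays no distinguished role, so the same holds after any rotation, and hence (as in Remark~\ref{import}) $u$ and $v$ are one-dimensional, giving $(u(x),v(x))=(\overline u(\omega_u\cdot x),\overline v(\omega_v\cdot x))$ for suitable $\overline u,\overline v:\R\to\R$ and $\omega_u,\omega_v\in\S^{n-1}$. I expect the only genuinely delicate point to be the matching of regularity hypotheses: Theorem~\ref{T2} as stated wants $u,v\in C^{1,1}$ or convex-and-Lipschitz, whereas the natural regularity for degenerate $p$-Laplacian-type systems is $C^{1,\beta}_{loc}$ with possible degeneracy on $\{\nabla u=0\}$, so one has to argue --- as in~\cite{Di} --- that the local analysis of Section~\ref{sec:local} (in particular Lemma~\ref{lem2} and the Hopf-type argument of Lemma~\ref{beH}) can be carried out on the open set where the gradients do not vanish, which is where the equations are uniformly elliptic; everything else is a routine translation of the structural conditions $(B1)$--$(B2)$ into \eqref{F1}--\eqref{F2} and \eqref{growth}.
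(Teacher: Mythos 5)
Your proposal takes essentially the same approach as the paper's proof: both define $F$ via the potentials $\Lambda_2$, $\Gamma_2$ of the operators (your $A$, $B$, with $A'(t)=a(t)t$, $B'(t)=b(t)t$, are precisely $\Lambda_2$, $\Gamma_2$ in the notation of \cite{Di}), exploit $F_{p_1p_2}\equiv 0$ so that the cross-term in \eqref{growth} vanishes, translate $(B1)$--$(B2)$ of \cite{FSV} into \eqref{F1}, \eqref{F2} and \eqref{growth} (the paper making this explicit by citing Lemma~2.1 of \cite{Di} and Lemma~4.2 and formula~$(4.13)$ of \cite{FSV}), identify stability for $\mathcal E$ with stability for \eqref{ppp1}, and invoke Theorem~\ref{T2}. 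Your explicit caveat about the regularity mismatch between Theorem~\ref{T2}'s hypotheses ($u,v\in C^{1,1}$ or convex Lipschitz, $F\in C^{3,\alpha}$) and the natural $C^1\cap C^2(\{\nabla u\neq0\})$ regularity for degenerate quasilinear systems is a genuine subtlety which the paper's own proof does not comment on, so flagging it is an improvement in exposition rather than a deviation.
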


\begin{proof}
We define
$$ F(p_1,p_2,z_1,z_2,x'):=\Lambda_2(|p_1|) +\Gamma_2(|p_2|) + \tilde F(z_1,z_2),$$
and we verify that
$$ \int_{\R^n} F(\nabla u,\nabla v, u, v, x')\, dx $$
satisfies the hypotheses needed to apply Theorem~\ref{T2}. Observe, first, that being stable for the above energy is the same a being stable for \eqref{ppp1} (see Definition 1.2 in \cite{Di}). Now, recalling the notations used in~\cite{Di}, we derive
\begin{eqnarray*}
&& F_{(p_1)_i} (p_1,p_2,z_1,z_2,x') = \lambda_2(|p_1|)_{(p_1)_i} = a(|p_1|)(p_1)_i,\\
&& F_{(p_2)_i} (p_1,p_2,z_1,z_2,x') = \gamma_2(|p_2|)_{(p_2)_i} = b(|p_2|)(p_2)_i,\\
&& F_{(p_1)_i(p_1)_j} (p_1,p_2,z_1,z_2,x') = a(|p_1|)\delta_{ij} + a'(|p_1|)\frac{(p_1)_i(p_1)_j}{|p_1|}
= A_{ij}(p_1),\\
&& F_{(p_2)_i(p_2)_j} (p_1,p_2,z_1,z_2,x') = b(|p_2|)\delta_{ij} + b'(|p_2|)\frac{(p_2)_i(p_2)_j}{|p_2|}
= B_{ij}(p_2)\\
{\mbox{and }} && F_{(p_1)_i(p_2)_j} (p_1,p_2,z_1,z_2,x') = 0.
\end{eqnarray*}
Therefore, Lemma~2.1 in~\cite{Di} implies the desired convexity properties on~$F$.
Moreover, it also gives that~$|F_{p_1p_1}(p_1,p_2,z_1,z_2,x')|$ and
$|F_{p_2p_2}(p_1,p_2,z_1,z_2,x')|$ are bounded from above and
below by $\left(\lambda_1(|p_1|)+\lambda_2(|p_1|)\right)$ and
$\left(\gamma_1(|p_2|)+\gamma_2(|p_2|)\right)$, respectively, up to multiplicative constants.

Furthermore, since conditions~$(B1)$ and~$(B2)$ in~\cite{FSV} are satisfied for $a$, $b$, $A$ and $B$,
one can use Lemma~4.2 there to see that, if $|p_1|$, $|p_2|\le M$, then
\begin{eqnarray*}
&& \lambda_1(|p_1|)\le C_M\lambda_2(|p_1|), \qquad \lambda_2(|p_1+q_1|)\le C_M\lambda_2(|p_1|), \\
&& \gamma_1(|p_2|)\le C_M\gamma_2(|p_2|) \quad {\mbox{ and }}\quad
\gamma_2(|p_2+q_2|)\le C_M\gamma_2(|p_2|),
\end{eqnarray*}
if $|q_1|\le |p_1|/2$ and $|q_2|\le |p_2|/2$, for some $C_M>0$. As a consequence,
\begin{equation}\begin{split}\label{9.18}
& |F_{p_1p_1}(p_1,p_2,z_1,z_2,x')|\le C_M\lambda_2(|p_1|)\\
{\mbox{ and }} & |F_{p_2p_2}(p_1,p_2,z_1,z_2,x')|\le C_M\gamma_2(|p_2|),
\end{split}\end{equation}
up to rename~$C_M$. Therefore, recalling also formulas (1.4)-(1.7) in~\cite{Di}, we get
\begin{eqnarray*}
&& |F_{p_1p_1}(p_1+q_1,p_2,z_1,z_2,x')|\le C_M \lambda_2(|p_1+q_1|)\\
&&\qquad\qquad \le C_M \lambda_2(|p_1|)\le C_M
|F_{p_1p_1}(p_1,p_2,z_1,z_2,x')| \\
{\mbox{and }} && |F_{p_2p_2}(p_1,p_2+q_2,z_1,z_2,x')|\le C_M \gamma_2(|p_2+q_2|)\\
&&\qquad\qquad\le
C_M \gamma_2(|p_2|)\le C_M |F_{p_2p_2}(p_1,p_2,z_1,z_2,x')|,
\end{eqnarray*}
for any $2|q_1|\le|p_1|\le M$ and $2|q_2|\le|p_2|\le M$. This means that~\eqref{F2} is satisfied.

It remains to check the growth condition~\eqref{growth}.
For this, notice that~\eqref{9.18} and formula~$(4.13)$ in~\cite{FSV} give that
\begin{eqnarray*}
&& |F_{p_1p_1}(p_1,p_2,z_1,z_2,x')|\, |p_1|^2 + |F_{p_2p_2}(p_1,p_2,z_1,z_2,x')|\, |p_2|^2 \\
&&\qquad \le
C_M \left(\lambda_2(|p_1|)|p_1|^2 + \gamma_2(|p_2|)|p_2|^2\right) \\
&&\qquad = C_M \left( a(|p_1|)|p_1|^2 + b(|p_2|)|p_2|^2\right)
\le C_M\left(\Lambda_2(|p_1|)+ \Gamma_2(|p_2|)\right).
\end{eqnarray*}
This and~\eqref{ppp2} imply that
\begin{eqnarray*}
&& \int_{B_R}  |F_{p_1p_1}(\nabla u,\nabla v,u,v,x')|\, |\nabla u|^2 +
|F_{p_2p_2}(\nabla u,\nabla v,u,v,x')|\, |\nabla v|^2\, dx \\
&&\qquad \le C_M \int_{B_R}
\left( \Lambda_2(|\nabla u|)+ \Gamma_2(|\nabla v|)\right)\, dx \le C_M R^2, \end{eqnarray*}
up to rename~$C_M$. This shows~\eqref{growth} holds true.

Hence, we can apply Theorem~\ref{T2}, thus obtaining the desired monotonicity property.
\end{proof}

As paradigmatic examples in Proposition~\ref{prop:plapl} one can take the $p$-Laplacian,
with~$p\in(1,+\infty)$ if~$\{\nabla u=0\}=\varnothing$ and~$p\in[2,+\infty)$
if~$\{\nabla u=0\}\neq\varnothing$ (in this case~$a(t)=t^{p-2}$),
or the mean curvature operator (in this case~$a(t)=(1+t^2)^{-1/2}$).
We stress on the fact that one can also take different operators~$a$ and~$b$ satisfying
the hypotheses (for instance, one can take~$a$ to be the $p$-Laplacian and~$b$
an operator of mean curvature type).

Furthermore, we observe that condition~\eqref{ppp2} is satisfied,
for instance, when~$n=2$ and~$\nabla u$ and~$\nabla v$ are bounded
(thanks to the hypotheses on~$a$ and~$b$, see page 3474 in~\cite{Di}),
This means that we recover Theorem~7.1 of~\cite{Di} for stable solutions,
without requiring conditions on the sign of~$\tilde F_{12}$.

\subsection{Systems involving the fractional Laplacian}
The general setting of our results allows us to treat also the case of nonlocal systems of equations, i.e.
\begin{eqnarray}
\left\{
\begin{array}{r c}
(-\Delta)^{s_1}u=\tilde F_1(u,v),\\
(-\Delta)^{s_2}v=\tilde F_2(u,v),
\end{array}
\right. \label{fractional}
\end{eqnarray}
where~$s_1$, $s_2\in(0,1)$, $\tilde F$ is a $C^{1,1}_{loc}$ function on~$\R^2$ and $\tilde F_1$ and $\tilde F_2$
denote the derivatives of~$\tilde F$ with respect to the first and the second variable respectively.

As a matter of fact, as in Remark~$2.12$ of~\cite{SaVa},
we observe that one can generalize the functional considered in~\eqref{quitegen} to the following
functional
\begin{equation}\label{nuovofunct}
\int_{\Omega} F(\nabla u,\nabla v,u,v,x')\,dx +\int_{\partial\Omega} G(u,v,x')\, d\mathcal{H}^{n-1},
\end{equation}
where $G$ satisfies the same regularity assumptions as~$F$.

Furthermore, the growth condition in~\eqref{growth} can be weakened in the following way.
We define~$\ell_0(R):=R$ and
$$ \ell_k(R):=\log \big(\ell_{k-1}(R)\big)= \underbrace{\log\circ\ldots\circ\log}_{ {\mbox{$k$ times}}} R \quad {\mbox{ for any }}
k\in\N, k\ge1.$$
We also set
$$ \pi_k(R):=\prod_{j=0}^k \ell_j(R).$$
Then, one can require that, for some~$k\in\N$,
\begin{equation}\begin{split}\label{picond}
&\int_{\Omega\cap B_R}|F_{p_1p_1}(\nabla u,\nabla v,u,v,x')||\nabla u|^2 +
|F_{p_2p_2}(\nabla u,\nabla v,u,v,x')||\nabla v|^2 \\
&\qquad\qquad\qquad\qquad\qquad + |F_{p_1p_2}(\nabla u,\nabla v,u,v,x')||\nabla u||\nabla v|\, dx
\leq CR\pi_k(R),
\end{split}\end{equation}
instead of~\eqref{growth}. Notice that if~$k=0$ then~\eqref{picond}
corresponds to~\eqref{growth}. See Remark~$2.13$ and Section~9 in~\cite{SaVa} for the proof
and further discussion on this fact.

We also recall that, using the extension result in~\cite{CS},
one can localize problem~\eqref{fractional} by adding one variable,
see e.g. formula~$(1.7)$ in~\cite{pinamonti}.
We will denote by~$U$ and~$V$
the extension functions of~$u$ and~$v$, respectively (see e.g. formulas~$(2.3)$
and~$(2.4)$ in~\cite{pinamonti}).

So one has to deal with an energy functional
like~\eqref{nuovofunct} with
\begin{equation}\begin{split}\label{asdqw98}
& \Omega:=(0,+\infty)\times\R^n, \quad V:=(0,+\infty)\times\R^{n-1},\\
& F(p_1,p_2,z_1,z_2,x'):=x_1^{1-2s_1}|p_1|^{2} + x_1^{1-2s_2}|p_2|^2\\
{\mbox{ and }} &G(z_1,z_2,x'):= \tilde F(z_1,z_2)
\end{split}\end{equation}
(notice that in this application one has to replace~$n$ with~$n+1$ to apply Theorem~\ref{T2}).

With this, we can prove the following:

\begin{prop} \label{prop:fraclapl}
Let $u$, $v\in C^2(\R^{n})$.
Suppose that $(u,v)$ is a stable solution to~\eqref{fractional}.
Also, assume that the following growth condition holds true in the extension:
there exist a constant $C>0$ and~$k\in\N$ such that, for $R$ sufficiently large,
\begin{equation}\label{poiuyt234}
\int_{B_R}x_1^{1-2s_1}|\nabla U|^2 +x_1^{1-2s_2}|\nabla V|^2\,dx_1\,\cdots dx_{n+1} \le CR\pi_k(R).
\end{equation}

Then $(u,v)$ possesses one-dimensional symmetry.
\end{prop}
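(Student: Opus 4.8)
The plan is to localize the nonlocal system~\eqref{fractional} by means of the Caffarelli--Silvestre extension~\cite{CS} and then to apply Theorem~\ref{T2}, in the generalized form that allows a boundary term~$G$ in the functional and the weaker growth condition~\eqref{picond} in place of~\eqref{growth} (see Remarks~2.12 and~2.13 in~\cite{SaVa}). Let~$U$ and~$V$ be the extensions of~$u$ and~$v$ to the half-space; then, with~$F$ and~$G$ as in~\eqref{asdqw98} and with~$n$ replaced by~$n+1$, the pair~$(U,V)$ is a critical point of the functional~\eqref{nuovofunct}. Moreover, a Lipschitz perturbation of~$(u,v)$ on~$\{x_1=0\}$ lifts to an admissible perturbation of~$(U,V)$ and conversely, so that~$(U,V)$ is $\{e_{n+1},e_{n+2}\}$-stable for the extended functional if and only if~$(u,v)$ is a stable solution of~\eqref{fractional}; this equivalence is obtained exactly as in Lemma~\ref{sens} and as in the scalar case treated in~\cite{SaVa, pinamonti}. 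Observe that here the distinguished, translation-invariant direction must be chosen orthogonal to~$e_1$, since the potential~$F$ in~\eqref{asdqw98} depends on the extension variable~$x_1$.

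Next I would verify that the extended functional satisfies the hypotheses of Theorem~\ref{T2}. The potential
\[ F(p_1,p_2,z_1,z_2,x')=x_1^{1-2s_1}|p_1|^{2}+x_1^{1-2s_2}|p_2|^{2} \]
is smooth --- in particular of class~$C^{3,\alpha}$ --- in all of its arguments on~$\R^{2(n+1)}\times\R^2\times V$, since on~$V=(0,+\infty)\times\R^{n-1}$ the weights~$x_1^{1-2s_i}$ are smooth and strictly positive; it is convex in~$(p_1,p_2)$, indeed $F_{p_1p_1}=2x_1^{1-2s_1}\,\mathrm{Id}$, $F_{p_2p_2}=2x_1^{1-2s_2}\,\mathrm{Id}$ and $F_{p_1p_2}=0$, and it is uniformly convex in~$p_1$ and in~$p_2$ on compact subsets of~$V$. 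Condition~\eqref{F2} holds trivially, with constant~$1$, because these second derivatives do not depend on~$p_1$ or~$p_2$ at all. The boundary term~$G(z_1,z_2,x')=\tilde F(z_1,z_2)$ inherits the regularity of~$\tilde F$, which suffices for the local analysis: indeed, in the open half-space the extensions~$U$ and~$V$ are smooth (in particular of class~$C^{1,1}$, as required in Theorem~\ref{T2}) by interior elliptic regularity for the degenerate equation, irrespective of the regularity of the boundary datum. Finally, since $|F_{p_1p_1}|=2x_1^{1-2s_1}$, $|F_{p_2p_2}|=2x_1^{1-2s_2}$ and $|F_{p_1p_2}|=0$, the left-hand side of~\eqref{picond} equals, up to a multiplicative constant,
\[ \int_{\Omega\cap B_R} x_1^{1-2s_1}|\nabla U|^{2}+x_1^{1-2s_2}|\nabla V|^{2}\,dx, \]
so that~\eqref{picond} is precisely the growth condition~\eqref{poiuyt234}.

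I would then invoke Theorem~\ref{T2}, in its generalized form, to conclude that~$U$ and~$V$ are monotone in the distinguished direction. Since both~$\Omega=(0,+\infty)\times\R^n$ and the extended functional are invariant under all translations orthogonal to~$e_1$ and under all rotations about the~$x_1$-axis, the theorem may be applied with the distinguished direction replaced by an arbitrary unit vector~$\omega$ orthogonal to~$e_1$; hence~$U$ and~$V$ are monotone on every line parallel to every such~$\omega$. Letting~$x_1\to0^+$, and using that monotonicity along a line is preserved under uniform limits together with the continuity of~$U$ and~$V$ up to~$\{x_1=0\}$, we obtain that~$u$ and~$v$ are monotone on every line of~$\R^n=\{x_1=0\}$. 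By Remark~\ref{import}, this yields the desired one-dimensional symmetry of~$u$ and~$v$.

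The only genuinely delicate point in this scheme is that the weights~$x_1^{1-2s_i}$ degenerate or blow up on~$\{x_1=0\}$, so that Lemmas~\ref{lem1}--\ref{beH} cannot be applied verbatim at boundary points. This difficulty is, however, exactly the one already settled for a single fractional equation in~\cite{SaVa} (see also~\cite{pinamonti}); since in~\eqref{asdqw98} the two gradient blocks are completely decoupled ($F_{p_1p_2}\equiv0$) and each summand has precisely the form treated there, the scalar arguments carry over without change, and the analysis of the perturbation at infinity in the presence of the boundary term~$G$ is handled as in Remark~2.12 of~\cite{SaVa}.
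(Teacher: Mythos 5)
Your proposal is correct and takes essentially the same route as the paper's proof, which merely notes that~\eqref{poiuyt234} is exactly the generalized growth condition~\eqref{picond} for the extended potential~\eqref{asdqw98} and then invokes Theorem~\ref{T2} in its generalized form with the boundary term~$G$. The additional detail you supply --- the verification of~\eqref{F1} and~\eqref{F2}, the rotational-invariance argument needed to upgrade monotonicity in a single direction to one-dimensional symmetry, the trace passage from~$(U,V)$ to~$(u,v)$, and the honest flag on the degeneracy of the weights at~$\{x_1=0\}$ --- is all material the paper leaves implicit, and your treatment of it is sound.
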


\begin{proof}
With the notation introduced in~\eqref{asdqw98}, we observe that the thesis
simply follows from~\eqref{poiuyt234} (that ensures the growth
condition~\eqref{picond}) and Theorem~\ref{T2}.
\end{proof}

We remark that~\eqref{poiuyt234} is a reasonable energy growth condition,
since it is satisfied for instance in the case of a single equation
when~$n=2$ for any~$s\in(0,1)$ and when~$n=3$ for~$s\in(1/2,1)$,
see~\cite{CC1,CC2}.
Moreover, if~$s_1=s_2=1/2$, it can be checked  as in~\cite{CC1}
with suitable modifications under an additional assumption on the bound
of~$\nabla U$ and~$\nabla V$ (see in particular formula~$(1.16)$ and Section~$4$ in~\cite{CC1}).

We finally remark that, differently from~\cite{pinamonti},
we do not need here any sign assumption on~$\tilde F$.

\section*{Acknowledgments} The authors want to thank \emph{Enrico Valdinoci}
for very helpful discussions and comments.

This work has been supported by Alexander von Humboldt Foundation and
ERC grant 277749 "EPSILON Elliptic PDE's and Symmetry of Interfaces and Layers for Odd Nonlinearities".

\vspace{2mm}


\begin{thebibliography}{12}
\setlinespacing{0.95}
\frenchspacing

\bibitem{BLWZ} {\sc H. Berestycki, T.-C. Lin, J. Wei and C. Zhao}:
On phase-separation models: Asymptotics and qualitative properties.
{\em Arch. Ration. Mech. Anal.} {\bf 208} (2013), no. 1, 163--200.

\bibitem{BSWW} {\sc H. Berestycki, S. Terracini, K. Wang, J. Wei}:
Existence and stability of entire solutions of an elliptic system modeling phase separation.
{\em Adv. Math.} {\bf 243} (2013), 102--126.

\bibitem{CC1}  {\sc X. Cabr\'e, E. Cinti}:
Energy estimates and 1-D symmetry for nonlinear equations involving the half-Laplacian. {\em Discrete Contin. Dyn. Syst.} {\bf 28} (2010), no. 3, 1179--1206.

\bibitem{CC2}  {\sc X Cabr\'e, E. Cinti}: Sharp energy estimates for nonlinear fractional diffusion equations.
{\em Calc. Var. Partial Differential Equations} {\bf 49} (2014), no. 1-2, 233--269.

\bibitem{CRS} {\sc L. Caffarelli, J.-M. Roquejoffre, O. Savin}:
Nonlocal minimal surfaces. {\it Comm. Pure Appl. Math.} {\bf 63} (2010), no. 9,  1111--1144.

\bibitem{CSaVa} {\sc L. Caffarelli, O. Savin, E. Valdinoci}:
Minimization of a fractional perimeter-Dirichlet integral functional.
{\em Ann. Inst. H. Poincar\'e Anal. Non Lin\'eaire} {\bf 32} (2015), no. 4, 901--924.

\bibitem{CS} {\sc L. Caffarelli, L. Silvestre}:
An extension problem related to the fractional Laplacian.
{\em Comm. Partial Differential Equations} {\bf 32} (2007), no. 7--9, 1245--1260.

\bibitem{guida} {\sc E. Di Nezza, G. Palatucci, E. Valdinoci}:
Hitchhiker's guide to the fractional Sobolev spaces.
{\em Bull. Sci. math.} {\bf 136}, no. 5 (2012), 521--573.

\bibitem{Di} {\sc S. Dipierro}:
Geometric inequalities and symmetry results for elliptic systems.
{\em Discrete Contin. Dyn. Syst.} {\bf 33} (8) (2013), 3473--3496.

\bibitem{pinamonti} {\sc S. Dipierro, A. Pinamonti}: A geometric inequality and a symmetry result
for elliptic systems involving the fractional Laplacian. {\em J. Differential Equations} {\bf 255}
(2013), no. 1, 85--119.

\bibitem{DipSV} {\sc S. Dipierro, O. Savin, E. Valdinoci}:
A nonlocal free boundary problem. {\em SIAM J. Math. Anal.}, to appear,
http://arxiv.org/pdf/1411.7971.pdf

\bibitem{farina} {\sc A. Farina}: Some symmetry results for entire solutions
of an elliptic system arising in phase separation.
{\em Discrete Contin. Dyn. Syst.} {\bf 34} (6) (2014), 2505--2511.

\bibitem{FSV} {\sc A. Farina, B. Sciunzi, E. Valdinoci}:
Bernstein and De Giorgi type problems: new results via a geometric approach.
{\em Ann. Sc. Norm. Super. Pisa Cl. Sci. (5)} {\bf 7}, no. 4, 741--791 (2008).

\bibitem{soave} {\sc A. Farina, N. Soave}: Monotonicity and $1$-dimensional symmetry
for solutions of an elliptic system arising in Bose-Einstein condensation.
{\em Arch. Ration. Mech. Anal.} {\bf 213} (2014), no. 1, 287--326.

\bibitem{fazly} {\sc M. Fazly, N. Ghoussoub}:
De Giorgi type results for elliptic systems. {\em Calc. Var. Partial Differential
Equations} {\bf 47} (2013), no. 3-4, 809--823.

\bibitem{sire} {\sc M. Fazly, Y. Sire}: Symmetry results for fractional elliptic systems and related
problems. {\em Comm. Partial Differential Equations} {\bf 40} (2015), no. 6, 1070--1095.

\bibitem{noris} {\sc B. Noris, H. Tavares, S. Terracini, G. Verzini}:
Uniform H\"older bounds for nonlinear Schr\"odinger systems with strong competition.
{\em Comm. Pure Appl. Math.} {\bf 63} (2010), no. 3, 267--302.

\bibitem{SaVa} {\sc O. Savin, E. Valdinoci}: Some monotonicity results for minimizers in the calculus of variations.
{\em J. Funct. Anal.} {\bf 264} (2013), no. 10, 2469--2496.

\bibitem{SaVa2} {\sc O. Savin, E. Valdinoci}:
Regularity of nonlocal minimal cones in dimension~2.
{\em Calc. Var. Partial Differential Equations} {\bf 48} (2013), no. 1-2, 33--39.

\bibitem{zilio} {\sc S. Terracini, G. Verzini, A. Zilio}: Uniform H\"older
bounds for strongly competing systems involving the square root of the Laplacian.
{\em Preprint} (2012), http://arxiv.org/pdf/1211.6087.pdf

\bibitem{zilio1} {\sc S. Terracini, G. Verzini, A. Zilio}: Uniform H\"older
regularity with small exponent in competition-fractional diffusion systems.
{\em Discrete Contin. Dyn. Syst.} {\bf 34} (2014), no. 6, 2669--2691.

\bibitem{Trud} {\sc N. Trudinger}: H\"older gradient estimates for fully nonlinear elliptic equations. {\em Proc. Roy. Soc.
Edinburgh, Sect. A} {\bf 108} (1988), 57--65.

\bibitem{zilio2} {\sc G. Verzini, A. Zilio}: Strong competition
versus fractional diffusion: the case of Lotka-Volterra interaction.
{\em Comm. Partial Differential Equations} {\bf 39} (2014), no. 1-2, 2284--2313.

\bibitem{Wa} {\sc K. Wang}:
On the De Giorgi type conjecture for an elliptic system modeling phase separation.
{\em Comm. Partial Differential Equations} {\bf 39} (4) (2014), 696--739.

\bibitem{Wa2} {\sc K. Wang}: Harmonic approximation and improvement of flatness
in a singular perturbation problem. {\em Manuscripta Math.} {\bf 146} (1-2) (2015), 281--298.

\end{thebibliography}
\end{document}